\documentclass{amsart}
\usepackage{wasysym}
\usepackage{amsmath}
\usepackage{amssymb}
\usepackage{amsthm}
\usepackage{tikz}
\usetikzlibrary{matrix,arrows,backgrounds,shapes.misc,shapes.geometric,patterns,calc,positioning}
\usetikzlibrary{calc,shapes}
\usepackage{wrapfig}
\usepackage{epsfig}  
\usepackage[margin=1.3in]{geometry}
\usepackage{mathtools}
\usepackage{color}	 %
\input{xy}
\xyoption{poly}
\xyoption{2cell}
\xyoption{all}
\newtheorem{thm}{Theorem}[section]
\newtheorem{prop}[thm]{Proposition}
\newtheorem{lemma}[thm]{Lemma}
\newtheorem{corollary}[thm]{Corollary}

\newtheorem*{thm*}{Theorem}

\usepackage{hyperref}
\theoremstyle{definition}
\newtheorem{definition}[thm]{Definition}
\theoremstyle{remark}
\newtheorem{remark}[thm]{Remark}
\newtheorem{example}[thm]{Example}
\numberwithin{equation}{section}
\newcommand{\cals}{\mathcal{S}}

\newcommand{\calc}{\mathcal{C}}

\newcommand{\za}{\alpha}
\newcommand{\zb}{\beta}
\newcommand{\zd}{\delta}

\newcommand{\ze}{\epsilon}
\newcommand{\zg}{\gamma}

\newcommand{\zs}{\sigma}

\newcommand{\zO}{\Omega}
\newcommand{\kb}{\Bbbk}

\newcommand{\Hom}{\textup{Hom}}
\newcommand{\add}{\textup{add}}
\newcommand{\cok}{\textup{cok}}
\newcommand{\im}{\textup{im}}

\newcommand{\rad}{\textup{rad}\,}

\newcommand{\Supp}{\textup{Supp\,}}
\newcommand{\Fac}{\textup{Fac\,}}

\newcommand{\Ext}{\textup{Ext}}
\newcommand{\End}{\textup{End}}

\newcommand{\cmp}{\textup{CMP}}
\newcommand{\scmp}{\underline{\textup{CMP}}}



\newcommand{\wt}{\textup{w}}
\begin{document}

\title[On syzygy categories over Iwanaga-Gorenstein algebras]{On syzygy categories over Iwanaga-Gorenstein algebras: Reduction, minimality and finiteness} 
\author{Ralf Schiffler}
\address{Department of Mathematics, University of Connecticut, Storrs, CT 06269-1009, USA} 
\email{schiffler@math.uconn.edu}

\author{Khrystyna Serhiyenko}
\thanks{The first author was supported by the NSF grants  DMS-2054561 and DMS-2348909. The second author was supported by the NSF grants DMS-2054255 and DMS-2451909. This work was supported by a grant from the Simons Foundation International [SFI-MPS-TSM-00013650, KS]}
\address{Department of Mathematics, University of Kentucky, Lexington, KY 40506-0027, USA}
\email{khrystyna.serhiyenko@uky.edu}


\maketitle
\setcounter{tocdepth}{2}

\begin{abstract} 
We study 2-Calabi-Yau tilted algebras which are non-commutative Iwanaga-Gorenstein algebras of Gorenstein dimension 1.  In particular, we are interested in their syzygy categories or equivalently the stable categories of Cohen-Macauley modules $\underline{\text{CMP}}$.  First we show that if an algebra $A$ is Iwanaga-Gorenstein of Gorenstein dimension 1 then its stable category is generated under extensions by its radical $\text{rad}\,A$.  Next, for a 2-Calabi-Yau tilted algebra $A$ we provide an explicit relationship between the $\underline{\text{CMP}}$ category of $A$  and its quotient $A/Ae_iA$ by an ideal generated by an idempotent $e_i$.   Consequently, we obtain various equivalent characterizations of when the $\underline{\text{CMP}}$ category remains the same after passing to the quotient.  We also obtain applications to two classes of algebras that are CM finite, the dimer tree algebras and their skew group algebras. 
\end{abstract}

\section{Introduction}
 The study of syzygies was initiated in Hilbert's famous work in the 19th century \cite{Hilbert} and has played an important role in algebra every since. By definition, a syzygy is a submodule of a free module, and therefore syzygies make up the kernels in free resolutions and projective resolutions.

From the point of view of representation theory, it is interesting to study the category of all syzygies over an algebra, especially in the case where a good understanding of the whole module category is out of reach.

In this paper, we let $A$ be an Iwanaga-Gorenstein algebra of Gorenstein dimension at most 1. This means that the projective $A$-modules have injective dimension at most 1 and the injective $A$-modules have projective dimension at most 1.  In this situation, an $A$-module $M$ is a syzygy if and only if $M$ is  \emph{Cohen-Macaulay}, which means that $\Ext^1(M,A)=0$. We denote the category of all syzygies over $A$ by $\cmp A$.

The Cohen-Macaulay modules also form an important class of modules that is very well studied in commutative algebra, see \cite{E, LW, Y}. In the non-commutative setting of Iwanaga-Gorenstein algebras, these modules were promoted in the seminal work of Auslander and Reiten \cite{AR} and Buchweitz \cite{Bu}. An important problem is the classification of rings of finite Cohen-Macauley type, which is solved for hypersurface singularities and for normal Cohen-Macauley rings of Krull dimension two, in the commutative case \cite{AV,Au,BGS, Es, Kn}.
  For higher dimensions, as well as for non-commutative rings, the problem is open. 

The category $\cmp A$ is a Frobenius category and its stable category $\scmp A$ is a triangulated category whose shift is given by the inverse syzygy functor $\zO^{-1}$. It was shown in \cite{Bu} that $\scmp A$ is equivalent to the singularity category of the algebra $A$.

\smallskip

Our first main result shows that the radical $\rad A$ generates the Cohen-Macaulay category. We say a category $\calc$ is \emph{generated under extensions} by an object $X$ if $\calc$ is the smallest category containing $X$ that is closed under extensions and direct summands.

\begin{thm}
 \label{thmintro2} (Theorem \ref{prop:genrad})
 Let $A$ be an Iwanaga-Gorenstein algebra of Gorenstein dimension 1. Then $\scmp A$ is generated under extensions by the $A$-module $\rad A$.
\end{thm}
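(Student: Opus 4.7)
The plan is to prove, by induction on the length of a Cohen--Macaulay module $N$, that $\Omega N$ lies in the extension-closed subcategory of $\scmp A$ generated by $\rad A$. Since $A$ is Iwanaga-Gorenstein of Gorenstein dimension $1$, every $M\in \cmp A$ is isomorphic in $\scmp A$ to $\Omega N$ for some $N \in \cmp A$ (take $N = \Omega^{-1} M$), so this strengthened statement will imply the theorem.

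For the inductive step, given a nonzero $N \in \cmp A$, I would choose a projective cover $\pi\colon P \twoheadrightarrow N$ with kernel $\Omega N$. Minimality of $\pi$ forces $\Omega N \subseteq \rad P$, and passing to the quotient yields the short exact sequence
\[
0 \to \Omega N \to \rad P \to \rad N \to 0,
\]
using $(\rad P)/(\Omega N) = \rad(P/\Omega N) = \rad N$. The next step is to verify that all three terms are CM. The module $\rad P$ is a syzygy of the semisimple module $P/\rad P$, so $\Ext^1(\rad P, A) = \Ext^2(P/\rad P, A) = 0$ because $\id A \le 1$; similarly, applying $\Hom(-,A)$ to $0 \to \rad N \to N \to N/\rad N \to 0$ and using $\Ext^1(N,A) = 0$ together with $\Ext^2(N/\rad N, A)=0$ shows $\rad N \in \cmp A$. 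Crucially, $\rad P \in \add(\rad A)$, because a decomposition $P = \bigoplus P_i^{n_i}$ into indecomposable projectives induces $\rad P = \bigoplus (\rad P_i)^{n_i}$ with each $\rad P_i$ a direct summand of $\rad A$.

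The short exact sequence therefore lives in the Frobenius category $\cmp A$ and induces a distinguished triangle in $\scmp A$ which, after rotation, reads
\[
\Omega(\rad N) \to \Omega N \to \rad P \to \rad N,
\]
exhibiting $\Omega N$ as an extension of $\rad P$ by $\Omega(\rad N)$. Since $\rad N$ is a proper submodule of $N$ (as $N \neq 0$), its length is strictly smaller than that of $N$, so the induction hypothesis applies to the CM module $\rad N$ and yields $\Omega(\rad N) \in \langle \rad A\rangle$. Combined with $\rad P \in \langle \rad A\rangle$, closure under extensions then gives $\Omega N \in \langle \rad A\rangle$, completing the induction.

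The principal obstacle will be verifying that each stage of the construction stays inside $\cmp A$, so that the triangulated structure of $\scmp A$ can legitimately be invoked. The Gorenstein dimension $1$ hypothesis enters exactly once but decisively: to ensure that $\rad N$ inherits the Cohen--Macaulay property from $N$, which is the engine that drives the induction.
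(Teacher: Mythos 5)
Your proof is correct, and it takes a genuinely different route from the paper's. The paper proves the theorem as a special case of a more general lemma (Lemma~\ref{genrad}) that characterizes generation by any subset $\{\rad P(i): i\in I\}$ of radicals via a two-condition criterion involving short exact sequences with projective middle term. The relevant direction, (b)~$\Rightarrow$~(a), is proved there by first stripping simple tops off a Cohen--Macaulay module one at a time (each such submodule is again CM because it is a submodule of a projective) and then inducting on the number of indecomposable summands of the projective module $P_M$. Your argument instead inducts directly on the length of $N$ and hinges on the observation that $\rad N$ is again Cohen--Macaulay when $N$ is --- a consequence of $\id A_A\le 1$ applied to $\Ext^2(N/\rad N, A)=0$ --- which lets you run a clean one-step triangle $\Omega(\rad N)\to\Omega N\to\rad P\to\rad N$ and conclude by extension closure. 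Your approach is tighter for the theorem in isolation, exploiting that when $I$ is all of $Q_0$ one may freely use the radical of \emph{any} projective; the paper's lemma is deliberately more elaborate because the restricted-$I$ version is needed later for Corollary~\ref{cor}(e)--(f), where $X_M$ must avoid the vertex $i$. Both arguments use the same underlying Frobenius-category mechanism (conflations give triangles, syzygies and radicals of CM modules are CM); the difference is the shape of the induction.
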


\smallskip
Next we restrict the generality of our setting and let $A$ be a 2-Calabi-Yau tilted algebra.  This means that $A$ is the endomorphism algebra of a cluster-tilting object in a 2-Calabi-Yau category. Examples of 2-Calabi-Yau tilted algebras are cluster-tilted algebras and Jacobian algebras of quivers with potential \cite{Amiot}. This is a large and well-studied class of algebras, see for example the surveys \cite{Assem, Reiten}. It was shown in \cite{KR} that $A$ is Iwanaga-Gorenstein of Gorenstein dimension at most 1 and that $\scmp A$ is a 3-Calabi-Yau category \cite{KR}.

In this paper, we study the behavior of the Cohen-Macaulay category under the reduction of the algebra obtained by removing a primitive idempotent, or equivalently, by removing a vertex from the quiver. 
More precisely, let $Q$ be the Gabriel quiver of $A$ and let $i$ be a vertex of $Q$. The constant path $e_i$ induces a primitive idempotent in $A$, which we also denote by $e_i$.  

Let $J=A e_i A$ be the two-sided ideal generated by $e_i$ and let $B=A/J$ be the quotient algebra. 
Then $B$ is also 2-Calabi-Yau tilted (Proposition~\ref{prop 22}) and the quiver of $B$ is obtained from $Q$ by removing the vertex $i$ and all arrows incident to it. We are studying the relation between $\cmp A $ and $\cmp B$.

The following theorem is our second main result. It gives a realization of $\cmp B$ inside $\cmp A$ that uses the ext-perpendicular category $J^\perp$ of $J$, whose objects are the Cohen-Macaulay modules that have no extension with  $J$.  Let $\Fac J$ denote the full subcategory of $\text{mod}\, A$ consisting of all modules that are quotients of sums of $J$. 


\begin{thm}
 \label{thmintro1}
(Theorem \ref{thm:eq})
 Let $A$ be a 2-Calabi-Yau tilted algebra, $J=Ae_iA$ and $B=A/J.$ Then there are equivalences of categories
 \[ {J^\perp/(\Fac J)} \cong \cmp\,B \qquad and \qquad\underline{J^\perp/(\Fac J)} \cong \scmp\,B,\]
 where $(\Fac J)$ is the ideal of $\cmp\, A$ of all morphisms factoring through $\Fac J \cap \cmp\, A$.
 \end{thm}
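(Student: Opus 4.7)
The plan is to realize the equivalence via restriction of scalars along the surjection $\pi\colon A\to B=A/J$. A $B$-module is precisely an $A$-module annihilated by $J$, so $\pi$ induces a fully faithful functor $\rho\colon \textup{mod}\,B\hookrightarrow \textup{mod}\,A$ satisfying $\Hom_A(\rho M,\rho N)=\Hom_B(M,N)$ (any $A$-linear map between $B$-modules is automatically $B$-linear). By Proposition \ref{prop 22}, $B$ is itself 2-Calabi--Yau tilted, so $\cmp B$ is well-defined and $\scmp B$ is triangulated.

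Step one is to check that $\rho$ restricts to $\cmp B\to \cmp A\cap J^\perp$. Given $M\in\cmp B$, applying $\Hom_A(\rho M,-)$ to $0\to J\to A\to B\to 0$ lets me trade the needed vanishings $\Ext^1_A(\rho M,A)=0$ and $\Ext^1_A(\rho M,J)=0$ for a vanishing of $\Ext^1_A(\rho M,B)$, which matches $\Ext^1_B(M,B)=0$ by a standard change-of-rings identification: a minimal projective $B$-resolution of $M$ lifts to an $A$-resolution whose deviations from a projective $A$-resolution sit in $\add J$ in the relevant degree, and the high-degree Ext vanishings follow from the Gorenstein dimension of $A$.

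Step two is the equivalence itself. Full faithfulness after passing to $J^\perp/(J)$ reduces to verifying that no non-zero morphism between $B$-modules factors through $\add J$ as an $A$-map; I expect this to follow from a top/composition-factor analysis around the idempotent $e_i$, since the image of a $B$-module $M$ in $X\in\add J$ lies in the $J$-annihilator $\{x\in X:Jx=0\}$, and the composition with the subsequent map to another $B$-module can be forced to vanish by exploiting the $J^\perp$ hypothesis. For essential surjectivity, given $N\in J^\perp$ I take $M=N/JN$; the sequence $0\to JN\to N\to M\to 0$ yields an isomorphism $N\cong \rho M$ in $J^\perp/(J)$ provided $JN\in\add J$, and the hypothesis $\Ext^1_A(N,J)=0$ ensures that the inclusion $JN\hookrightarrow N$ behaves like a right $\add J$-approximation. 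Theorem \ref{prop:genrad}, which says $\scmp A$ is generated under extensions by $\rad A$ (whose $e_i$-piece lies in $\add J$), is the tool for upgrading this to $JN\in \add J$ modulo projective summands. One then checks $M\in\cmp B$ by the same change-of-rings identification as in step one. This essential surjectivity step, and the subtle interaction between the $J$-torsion structure of $N$ and the Cohen--Macaulay condition over $B$, is where I expect the main obstacle.

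For the stable version, each indecomposable projective $B$-module $e_jB$ for $j\ne i$ fits in a short exact sequence $0\to e_jJ\to e_jA\to e_jB\to 0$ with $e_jJ\in \add J$, so $\rho(e_jB)$ becomes isomorphic in $J^\perp/(J)$ to the projective $A$-module $e_jA$. Hence the ideal of maps factoring through $\add A$ inside $J^\perp/(J)$ corresponds under $\rho$ to the ideal of maps factoring through $\add B$ inside $\cmp B$, and a further quotient delivers the second equivalence $\underline{J^\perp/(J)}\cong \scmp B$.
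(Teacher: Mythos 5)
Your Step~1 is where the argument breaks: restriction of scalars does \emph{not} send $\cmp B$ into $\cmp A$, let alone into $J^\perp$. Concretely, in the worked example (Example~\ref{ex1}) the simple $B$-module $S(1)$ lies in $\cmp B$ (it appears in the displayed AR-quiver of $\cmp B$), but $\rho(S(1))=S(1)$ is \emph{not} a Cohen--Macaulay $A$-module: the only $A$-modules in $\cmp A$ with top $S(1)$ are $\begin{smallmatrix}1\\2\end{smallmatrix}$, $\begin{smallmatrix}1\,5\\2\end{smallmatrix}$ and $P_A(1)$, and $S(1)$ itself is absent from the AR-quiver of $\cmp A$. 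The long-exact-sequence/change-of-rings argument you sketch cannot rescue this, because the sequence $0\to J\to A\to B\to 0$ only yields $\Ext^1_A(M,B)\cong\Ext^2_A(M,J)$ \emph{after} you already know $\Ext^{\ge 1}_A(M,A)=0$, and also because $\Ext^1_A(M,B)$ is not the same group as $\Ext^1_B(M,B)$ --- the latter is in general only a subgroup of the former, and for $B$-modules that are not $A$-syzygies the comparison genuinely fails.

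The paper's functor goes in the opposite direction, from $J^\perp$ to $\cmp B$, sending $X\mapsto X/\mathrm{im}\,f_X$ where $f_X\colon J_X\to X$ is an $\add J$-approximation, and the essential-surjectivity input (Lemma~\ref{lem:ses}) shows that a given $Y\in\cmp B$ lifts not to $Y$ itself but to an \emph{extension} $0\to J_X\to X\to Y\to 0$ with $J_X\in\add J$ and $X\in J^\perp$. This extension is built by pulling back $0\to J_{P_A}\to P_A\to P_B\to 0$ along $Y\hookrightarrow P_B$ and then iteratively killing $\Ext^1_A(X_k,J)$ using the 3-Calabi--Yau property of $\scmp A$. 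That the lift is an extension by $\add J$, rather than the $B$-module itself, is precisely the idea your proposal is missing, and it is why the quotient category $J^\perp/(J)$ --- rather than the image of $\rho$ --- is the right model for $\cmp B$. Your construction $M=N/JN$ in the essential-surjectivity paragraph is in the right spirit (it is essentially the paper's $F$), but treating it as producing an ``isomorphism $N\cong\rho M$ in $J^\perp/(J)$'' again silently assumes $\rho M\in J^\perp$, which fails.
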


We also give an explicit construction of the functors that realize these equivalences. We remark here that the definition of the functor uses the fact that the category $\scmp A$ is 3-Calabi-Yau, which is why we require $A$ to be 2-Calabi-Yau tilted.

 As a consequence of these two theorems, we obtain the following five characterizations for the equivalence of the syzygy categories of $A$ and $B$.

\begin{corollary}\label{corintro}
(Corollary \ref{cor})
 Let $A$ be a 2-Calabi-Yau tilted algebra, and let $m_{ji}
 =\dim \,e_j Ae_i$. Then the following statements are equivalent.\smallskip
\begin{enumerate}
\item [(a)] $\bar F: \scmp\,A \to \scmp\,B$ is an equivalence. \smallskip
\item [(b)] $J$ is a projective $A$-module and $m_{ii}=1$.\smallskip
\item [(c)] $J\cong \oplus_{j\in Q_0} P_A(i)^{m_{ji}}$.\smallskip
\item [(d)] For  all $j\in Q_0$, the module $P_A(i)^{m_{ji}}$ is a submodule of $ P_A(j)$.\smallskip

\item [(e)] The radial $\rad\, P(i)$ is generated by the radicals  $ \rad P(j)$, with ${j\not=i}$.\smallskip
\item [(f)] There exists a short exact sequence $0\to \rad\, P(i) \oplus Y\to P \to X\to 0$ where  $Y\in\cmp A$, $P$ is a projective $A$-module, and $X$ is an $A$-module that is not supported at $i$.
\end{enumerate}
\end{corollary}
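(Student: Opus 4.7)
The plan is to run a cycle of implications, grouping the six conditions by the tools they require: Theorem~\ref{thm:eq} for (a)$\Leftrightarrow$(b), direct analysis of the decomposition $J=\bigoplus_j Ae_iAe_j$ for (b)$\Leftrightarrow$(c)$\Leftrightarrow$(d)$\Leftrightarrow$(e), and Theorem~\ref{prop:genrad} for (e)$\Leftrightarrow$(f).

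For (a)$\Leftrightarrow$(b), I would first observe that $J\in\cmp A$: applying $\Hom(-,A)$ to the short exact sequence $0\to J\to A\to B\to 0$ and using that $A$ has injective dimension at most $1$ gives $\Ext^1(J,A)\cong\Ext^2(B,A)=0$. The direction (b)$\Rightarrow$(a) is then immediate: if $J$ is projective then $J$ is zero in $\scmp A$, the ideal $(J)$ of maps factoring through $J$ vanishes there, and the ext-perpendicular $J^\perp$ exhausts $\cmp A$, so Theorem~\ref{thm:eq} collapses to $\scmp B\cong\scmp A$. For (a)$\Rightarrow$(b), I would track the bimodule $J$ through the explicit construction of Theorem~\ref{thm:eq}: a non-projective summand of $J$ would give rise to a nonzero object of $\scmp A$ with no matching indecomposable on the $\underline{J^\perp/(J)}$ side (where it would be killed by factoring through $J$), contradicting the equivalence $\scmp A\cong\scmp B$.

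For (b)$\Leftrightarrow$(c)$\Leftrightarrow$(d), decompose $J=\bigoplus_j Je_j=\bigoplus_j Ae_iAe_j$ as a left $A$-module via right-orthogonality of the idempotents; each summand embeds in $P_A(j)=Ae_j$. Since $J$ is generated as a left module by elements of the form $e_ix$, every indecomposable projective summand of $J$ must be a copy of $P_A(i)$, and matching multiplicities against $\dim e_jAe_i=m_{ji}$ identifies $Ae_iAe_j$ with $P_A(i)^{m_{ji}}$, giving (c); the inclusion $Ae_iAe_j\hookrightarrow P_A(j)$ then gives (d). For (d)$\Leftrightarrow$(e), any map $P_A(j)\to P_A(i)$ with $j\ne i$ is given by an element of $e_jAe_i\subseteq\rad A$, so factors through $\rad P_A(i)$; condition (d) produces $m_{ji}$ linearly independent such maps for each $j\ne i$, and their combined images exhaust $\rad P_A(i)$, which is (e).

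Finally, for (e)$\Leftrightarrow$(f), I would invoke Theorem~\ref{prop:genrad}: since $\scmp A$ is generated under extensions by $\rad A=\bigoplus_j\rad P_A(j)$, condition (f) asserts precisely that $\rad P_A(i)$ is redundant in this generating list, appearing (up to a summand $Y\in\cmp A$) as a syzygy of an $A$-module $X$ not supported at $i$. Reading the sequence in (f) as a projective presentation of the $B$-module $X$, whose cover must lie in $\bigoplus_{j\ne i}P_A(j)$, yields (e) directly. The main obstacle is (a)$\Rightarrow$(b), which requires carefully tracking the abstract equivalence $\scmp A\cong\scmp B$ against the explicit functor of Theorem~\ref{thm:eq}; the remaining implications are bookkeeping on the decomposition of $J$ and its radical.
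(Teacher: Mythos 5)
Your overall cycle contains a genuine gap at the step $(d)\Leftrightarrow(e)$, and this gap matters because it is exactly where the paper has to do the hardest work. You interpret ``$\rad P(i)$ is generated by the other radicals'' as a module-theoretic statement, namely that images of maps from the $P(j)$ ($j\ne i$) into $\rad P(i)$ exhaust $\rad P(i)$, and you try to read this off from the inclusions in (d). But in this paper ``generated'' means \emph{generated under extensions and direct summands inside the triangulated category $\scmp A$} (the definition preceding Lemma~\ref{genrad}); it is a statement about the extension closure of $\{\rad P(j): j\ne i\}$ in $\scmp A$, not about module surjections. As a result, (d) does not imply (e) by inspecting $\Hom(P(j),P(i))$, nor does (e) imply (d) by saying images ``exhaust'' $\rad P(i)$. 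Moreover, the indices in your argument are reversed: for right modules $\Hom_A(P(j),P(i))\cong e_iAe_j$, and (d) is about embeddings $P(i)^{m_{ji}}\hookrightarrow P(j)$, which point the opposite way.

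The paper never proves $(d)\Leftrightarrow(e)$ directly. Its structure is $(a)\Leftrightarrow(b)\Leftrightarrow(c)\Leftrightarrow(d)$, plus $(e)\Leftrightarrow(f)$ by Lemma~\ref{genrad} with $I=Q_0\setminus\{i\}$, and then a nontrivial bridge between the two blocks through (a): for $(e)\Rightarrow(a)$ one applies $\Hom_A(J,-)$ to the sequence in (f), uses Lemma~\ref{lem:J}(a),(e) to kill the outer terms and deduce $\Ext^1_A(J,\rad P(i))=0$, and then invokes \cite[Lemma 5.2]{SS4} (every non-projective syzygy has an extension with the radical of its projective cover) together with $\textup{top}\,J\in\add S(i)$ to force $J$ projective, hence (b). For $(a)\Rightarrow(e)$ one transports Theorem~\ref{prop:genrad} for $B$ across the equivalence of Theorem~\ref{thm:eq}, noting $F(\rad P_A(j))$ corresponds to $\rad P_B(j)$. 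Your sketch contains none of these ingredients and in particular misses \cite[Lemma 5.2]{SS4}, which is the essential input. Separately, your $(a)\Rightarrow(b)$ argument (``a non-projective summand of $J$ would give a nonzero object with no matching indecomposable'') is too informal; an abstract equivalence $\scmp A\cong\scmp B$ does not a priori fix where individual objects go, and the paper instead argues that $\underline{J^\perp/(J)}\cong\scmp A$ forces $J=0$ in $\scmp A$. Your $(b)\Leftrightarrow(c)\Leftrightarrow(d)$ is essentially the paper's argument once the left/right module convention is corrected (the paper works with right modules and $e_jJ=e_jAe_iA\subseteq e_jA=P_A(j)$), and your $(e)\Leftrightarrow(f)$ via the radical-generation lemma is right in spirit, though it is Lemma~\ref{genrad} rather than Theorem~\ref{prop:genrad} that does the work.
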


\begin{remark}
 The construction in Theorem~\ref{thmintro1} is reminiscent of Iyama-Yoshino \cite{IY} reduction in two ways. 

(i) On the level of 2-Calabi-Yau tilted algebras and their cluster categories, Iyama-Yoshino reduction describes a relation between the 1-cluster categories $\mathcal{C}_A$ of $A$ and $\mathcal{C}_B$ of $B$ via the perpendicular category $(P_A(i)[1])^\perp$ of the shift of the indecomposable projective at vertex $i$ inside $\mathcal{C}_A$. 

(ii) Our construction $\underline{J^\perp/(\Fac J)}$ in Theorem \ref{thmintro1} is an analog of the Iyama-Yoshino reduction (because $J$ is rigid, see Lemma~\ref{lem:J} (g)) of the 2-cluster category $\scmp A$. This category is very different from the reduction $\mathcal{C}_B$ in (i) above. 

One may think of   Theorem~\ref{thmintro1} as relating these two reductions by proving that $\underline{J^\perp/(\Fac J)}$ of (ii) is equivalent to the stable Cohen-Macaulay category of the reduced algebra $B$ in (i).
\end{remark}
\smallskip

\subsection{Finite CM-type}
We then turn our attention to 2-Calabi-Yau tilted algebras that have finite Cohen-Macaulay categories. Such an algebra is said to be of \emph{finite CM-type}. It follows from \cite{Amiotthesis} that
in this case the stable Cohen-Macaulay category is  contained in a 2-cluster category of Dynkin type $\mathbb{A,D}$ or $\mathbb{E}$ (in fact, the 2-cluster category is a finite covering of the stable CM category). 
We refer to this Dynkin type as the \emph{CM-type} of the algebra. 

One motivating goal for our research is to classify 2-Calabi-Yau tilted algebras of finite CM-type. Corollary \ref{corintro} is a useful tool for this objective. For an example consider the algebras shown in Figure~\ref{fig ex 1}. Each is the Jacobian algebra of the shown quiver with respect to the potential given by the sum of the chordless cycles. The quiver of the algebra $A$ is the shown in the leftmost picture. The other algebras are obtained by reductions of $A$. Corollary~\ref{corintro} implies that all these algebras have isomorphic CMP categories. In fact, they are all of type $\mathbb{A}_2$.

In our earlier work \cite{SS3,SS4,SS5} we introduced and studied important classes of  algebras of finite CM-type which we call \emph{dimer tree algebras} as well as their skew group algebras. The dimer tree algebras are of CM-type $\mathbb{A}$ and their skew group algebras have CM-type $\mathbb{D}$. The algebra  $A$ in Figure~\ref{fig ex 1} is not a dimer tree algebra, but all of its reductions are dimer tree algebras.

Let $A$ be a dimer tree algebra with quiver $Q$. As before, let $i$ be a vertex in $Q$, $J=A e_i A$ and $B=A/J$. 
In Theorem \ref{thm A reduction}, we characterize combinatorially when the algebra $A$ and $B$ have the same CM-type in terms of the local neighborhood of the vertex $i$ in the quiver of $A$. 
%
%
%
%

This characterization is useful, for example, to find minimal representatives of a given CM-type. We say an algebra $A$ is \emph{CM-minimal} if the CM-type of $A$ is different from the CM-type of its reduction $A/A e_i A$, for all vertices $i$.

As an application, we obtain the following result on the relation between the minimality of dimer tree algebras their skew group algebras.
\begin{thm}
 \label{thmintro3} Let $A$ be a dimer tree algebra and $AG$ a corresponding skew group algebra. Then
$A$ is CM-minimal if and only if $AG$ is. 
\end{thm}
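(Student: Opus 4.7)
The plan is to reduce Theorem~\ref{thmintro3} to the combinatorial characterization of Theorem~\ref{thm A reduction}, applied in parallel to $A$ and to $AG$. Recall from \cite{SS5} that the quiver $Q_{AG}$ of the skew group algebra is a $G$-unfolding of the quiver $Q_A$ of $A$, with a canonical surjection $\pi\colon (Q_{AG})_0 \to (Q_A)_0$ whose fibers are the $G$-orbits of vertices, and such that the arrows at a vertex $j$ of $Q_{AG}$ cover the arrows at $\pi(j)$ of $Q_A$. By definition, $A$ is CM-minimal iff for every vertex $i$ of $Q_A$ the CM-type of $A/Ae_iA$ differs from that of $A$, and similarly for $AG$.

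The key step is to show that for every vertex $j\in (Q_{AG})_0$, the combinatorial criterion of Theorem~\ref{thm A reduction} holds at $j$ in $Q_{AG}$ if and only if it holds at $\pi(j)$ in $Q_A$. For this I would first establish the algebraic compatibility between reduction and the skew group construction: for $i\in (Q_A)_0$ the sum $\hat e_i=\sum_{k\in Gi}e_k$ is a $G$-invariant idempotent of $A$, hence there is a natural isomorphism
\[
(A/A\hat e_i A)\,G \;\cong\; AG/AG\,\hat e_i\,AG,
\]
and, since $G$ permutes the idempotents $\{e_k\}_{k\in Gi}$ transitively, the two-sided ideals $AG\,e_j\,AG$ and $AG\,\hat e_i\,AG$ coincide for every $j\in\pi^{-1}(i)$. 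Consequently the reduction $AG/AG\,e_j\,AG$ is itself a skew group algebra, namely $(A/A\hat e_i A)G$. Next, since reducing $A$ at an orbit is the same as iteratively reducing at single vertices of that orbit (the vertices in $Gi$ produce the same chain of CM-type comparisons as $i$ itself, by the quiver structure of a dimer tree algebra), the CM-type of $A$ agrees with the CM-type of $A/A\hat e_i A$ if and only if it agrees with the CM-type of $A/Ae_iA$.

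It then remains to verify that the local neighborhood at $j$ in $Q_{AG}$ is a $G$-equivariant cover of the local neighborhood at $i=\pi(j)$ in $Q_A$, and that the combinatorial condition of Theorem~\ref{thm A reduction} (phrased in terms of the chordless cycles through the vertex, their lengths and the positions of the incident arrows) is invariant under this unfolding. Once this is done, the chain of equivalences
\[
A\text{ CM-minimal at }i \;\Longleftrightarrow\; AG\text{ CM-minimal at some (equivalently, every) }j\in\pi^{-1}(i),
\]
together with the surjectivity of $\pi$, gives the theorem.

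The main obstacle will be this last verification: ensuring that the local combinatorial invariants used in Theorem~\ref{thm A reduction} really transfer under the $G$-unfolding. This will likely require a case split according to whether $i$ is in the fixed locus of $G$ or in the free locus, because the preimage $\pi^{-1}(i)$ and the induced chordless cycles and arrows near it behave quite differently in the two cases (a free orbit simply replicates the local picture $|G|$ times, while a fixed vertex may split into several vertices of $Q_{AG}$ with additional arrows between them). A careful case-by-case match of chordless cycles through $j$ with chordless cycles through $i$, and of the relations that govern the combinatorial criterion, should close out the argument.
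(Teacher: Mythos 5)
Your plan rests on applying Theorem~\ref{thm A reduction} to both $A$ and $AG$ ``in parallel,'' but that theorem is stated only for dimer tree algebras. The skew group algebra $AG$ is \emph{not} a dimer tree algebra: dimer tree algebras have CM-type $\mathbb{A}$, whereas $AG$ has CM-type $\mathbb{D}$, and its quiver $Q_{AG}$ (with the split vertices $1^\pm, 2^\pm$ and the doubled arrows near them) does not satisfy the dimer tree axioms. So the criterion you hope to verify at a vertex $j$ of $Q_{AG}$ is simply not available, and the proposed ``chain of equivalences'' collapses at the start. The paper's actual proof handles $AG$ without ever invoking Theorem~\ref{thm A reduction} for it: in one direction it uses the ring-theoretic Corollary~\ref{cor}(d) together with the Schurian property of $A$ to rule out reduction at $1^\pm,2^\pm$ directly, and for vertices coming from free orbits it uses the compatibility of reduction with the skew group construction.

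There is also a second, more local problem. Your description of a surjection $\pi\colon (Q_{AG})_0 \to (Q_A)_0$ ``whose fibers are the $G$-orbits'' and the claim $AG\,e_j\,AG = AG\,\hat e_i\,AG$ for every $j\in\pi^{-1}(i)$ misread the geometry of the construction. A free $G$-orbit $\{i,\sigma i\}$ in $Q_A$ collapses to a \emph{single} vertex of $Q_{AG}$, so there the fiber is a singleton and the claim is vacuous; a fixed vertex of $Q_A$ (there are two, namely $1$ and $2$) splits into \emph{two} vertices of $Q_{AG}$ (e.g.\ $1^+,1^-$), and the ideals $AG\,e_{1^+}\,AG$ and $AG\,e_{1^-}\,AG$ certainly do not coincide, nor do they equal $AG\,e_1\,AG$. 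The statement ``$(A/A\hat e_i A)G \cong AG/AG\hat e_i AG$'' is the right kind of compatibility lemma for the free-orbit case, but it does not help at the fixed vertices, which is exactly where the new phenomena live. The paper's argument at $1^\pm,2^\pm$ is a genuinely different kind of argument (path supports plus Schurian), and the iterative-reduction step in the converse direction requires the checkerboard polygon/radical-line disjointness argument, not merely the quiver-level observation you gesture at.
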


\smallskip

\subsubsection*{Related work}

The syzygy categories of finite dimensional algebras have been studied in different settings by several research groups. The work \cite{CGL} focused on cluster-tilted algebras of Dynkin type, and \cite{Lu} extended their results to polygon-tree algebras. The case of gentle algebras was studied in \cite{Kalck} and generalized to skew-gentle algebras in \cite{CL}. Further results can be found in \cite{AIR,Chen,C2, GES, LZ,Sh}.

\smallskip

The paper is organized as follows.  Theorem~\ref{thmintro2} is proved in section~\ref{sect 2}, and Theorem~\ref{thmintro1} and Corollary~\ref{corintro} in section~\ref{sect 3}. The applications to finite CM-type, including Theorem~\ref{thmintro3}, are presented in section~\ref{sect 4}.

\begin{figure}
\begin{minipage}{.2\textwidth}
\hspace*{-.4cm}$A:$\xymatrix{
&4\ar[d]&1\ar[l]\ar[d]\\
6\ar[ur]&3\ar[d]\ar[ur]&2\ar[l]\\
&5\ar[ul]\ar[ur]} 
\end{minipage}
\begin{minipage}{.3\textwidth}
\hspace*{.3cm}$A/Ae_1A:$\xymatrix{
&4\ar[d] \\
6\ar[ur]&3\ar[d]&2\ar[l]\\
&5\ar[ul]\ar[ur]} 
\vspace{1cm}
\hspace*{.3cm}$A/Ae_2A:$\xymatrix{
&4\ar[d]&1\ar[l]\\
6\ar[ur]&3\ar[d]\ar[ur]\\
&5\ar[ul]} 
\end{minipage}
\begin{minipage}{.3\textwidth}
\hspace*{.5cm}$A/Ae_6A:$\xymatrix{
&4\ar[d]&1\ar[l]\ar[d]\\
&3\ar[d]\ar[ur]&2\ar[l]\\
&5\ar[ur]
}
\vspace{1cm}
\hspace*{.5cm}$A/A(e_1+e_2)A:$\xymatrix{
&4\ar[d]\\
 6\ar[ur]&3\ar[d]\\
&5\ar[ul]} 
\end{minipage}
\caption{A dimer algebra $A$ given by the quiver and its four reductions at the idempotents $e_1, e_2, e_6, e_1+e_2$ respectively. All of these algebras are of CM type $\mathbb{A}_2$, and the algebras in the right column are CM minimal.}
\label{fig ex 1}
\end{figure}
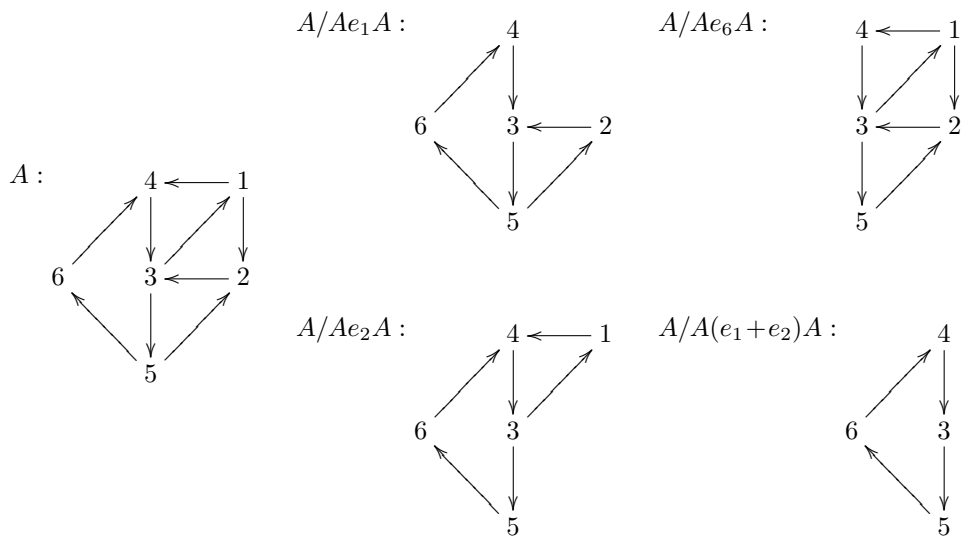


\section{Syzygies of 1-Iwanaga Gorenstein algebras are generated by radicals}\label{sect 2}
 Throughout the paper, algebras will be finite dimensional algebras over an algebraically closed field $\kb$. We denote the category of finitely generated right $A$-modules by $\textup{mod}\,A$.   For each vertex $i$ in the Gabriel quiver of the algebra, we denote by $P(i),I(i)$ and $S(i)$ the corresponding indecomposable projective, injective, and simple module, respectively. The radical $\rad P(i)$ is the kernel of the projective cover $P(i)\to S(i)$. 
For further information on representation theory, we refer to \cite{ASS,S2}.

\smallskip
Throughout this section, let $A$ be an Iwanaga-Gorenstein algebra of Gorenstein dimension at most 1, which means that the injective dimension of every projective $A$-module is at most one, and the projective dimension of every injective $A$-module is at most one.    Let $\cmp\,A$ denote the category of \emph{Cohen-Macaulay} modules over $A$, that is, 
\[\cmp\,A=\{ M \in \textup{mod}\,A\mid \Ext^1_A(M, A)=0\},\]
and let $\scmp\,A$ denote the associated stable category.  By \cite{Bu}, the category $\cmp\,A$ is a Frobenius category whose projective-injective objects are the projective $A$-modules, and its stable category  $\scmp\,A$ is a triangulated category with inverse shift given by the syzygy functor $\Omega$ . Moreover,  the category $\scmp\,A$ also coincides with the category of non-projective syzygies over $A$.

For a collection of objects $S$ in $\scmp A$, let $\overline{S}$ denote the smallest  subcategory of $\scmp A$ that is  extension closed, closed under direct summands, and contains $S$.  We say that an object $M$ of $\scmp A$ is \emph{generated by $S$} if $M$ lies in $\add\, \overline{S}$.  

\begin{lemma}\label{genrad}
 Let $I$ be a subset of the vertices of the quiver of $A$. 
 Then the following are equivalent  for $M\in\scmp A$. 

(a) $M$ is generated by the radicals $\rad P(i), i\in I$;

(b) there exists a short exact sequence
 \[0\to Y_M
 \to P_M \to X_M\to 0,\] 
 such that $Y_M\in \cmp A$ that contains $M$ as a summand,
 $P_M$ is projective, and $X_M$ is supported only at vertices of $I$.
 \end{lemma}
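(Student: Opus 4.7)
The plan is to prove each direction separately: (b)$\,\Rightarrow\,$(a) will be a direct syzygy computation, while (a)$\,\Rightarrow\,$(b) will show that the class of modules fitting into a sequence as in (b) already contains every $\rad P(i)$ with $i\in I$ and is closed under direct summands and extensions in $\scmp A$.

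For (b)$\,\Rightarrow\,$(a), I would first note that a module supported only on vertices of $I$ admits a composition series whose simple subquotients are all of the form $S(i)$, $i\in I$. Applying the horseshoe lemma along such a composition series of $X_M$ produces a filtration of $\Omega X_M$ in $\scmp A$ whose successive quotients are the syzygies $\Omega S(i) = \rad P(i)$; any projective summands introduced when stacking the horseshoes vanish in the stable category. Since $P_M$ is projective, Schanuel's lemma yields $Y_M \cong \Omega X_M$ in $\scmp A$, so $Y_M$ lies in the extension closure $\overline{\{\rad P(i) : i\in I\}}$, and the same then holds for its direct summand $M$.

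For (a)$\,\Rightarrow\,$(b), the base case $M=\rad P(i)$ is handled by the canonical sequence $0\to\rad P(i)\to P(i)\to S(i)\to 0$, and summand closure is immediate. For extension closure, suppose $0\to M_1\to M\to M_2\to 0$ is short exact in $\cmp A$ and each $M_k$ fits into a sequence $0\to Y_k\to P_k\to X_k\to 0$ with $Y_k=M_k\oplus M_k'$ and $X_k$ supported on $I$. Direct-summing the given extension with the trivial sequences involving $M_1'$ and $M_2'$, I obtain $0\to Y_1\to M\oplus M_1'\oplus M_2'\to Y_2\to 0$. Pushing this out along $Y_1\hookrightarrow P_1$ produces a module $Z$ fitting into both $0\to P_1\to Z\to Y_2\to 0$ and $0\to M\oplus M_1'\oplus M_2'\to Z\to X_1\to 0$; since $Y_2\in\cmp A$ one has $\Ext^1_A(Y_2,P_1)=0$, so the first sequence splits and $Z\cong P_1\oplus Y_2$. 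Composing the second inclusion with $P_1\oplus Y_2\hookrightarrow P_1\oplus P_2$ (using $Y_2\hookrightarrow P_2$) yields $0\to M\oplus M_1'\oplus M_2'\to P_1\oplus P_2\to X\to 0$, where $X$ is an extension of $X_2$ by $X_1$ and hence still supported on $I$. This is exactly (b) for $M$.

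Finally, extension closure must be interpreted triangle-wise in $\scmp A$: a triangle $M_1\to M\to M_2\to\Omega^{-1}M_1$ arises from a short exact sequence $0\to M_1\to M'\to M_2\to 0$ in $\cmp A$ with $M\oplus P\cong M'\oplus P'$ for some projectives $P,P'$, and condition (b) is clearly preserved under adding or removing projective summands (the sequence in (b) can absorb any split sequence $0\to Q\to Q\to 0\to 0$), so the argument above transfers from $M'$ to $M$. The main obstacle is the gluing step in the extension-closure argument: assembling a single short exact sequence whose kernel still has $M$ as a summand and whose cokernel remains supported only on $I$. The Cohen-Macaulay vanishing $\Ext^1_A(-,A)=0$, which forces the intermediate pushout to split, is exactly what makes the two input sequences combine cleanly.
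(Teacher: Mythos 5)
Your proof is correct. The two directions compare to the paper's argument as follows.

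For (a)$\Rightarrow$(b) you take essentially the same route as the paper: given an extension of $M_1$ by $M_2$, you pad with split sequences to turn it into an extension of $Y_1=M_1\oplus M_1'$ by $Y_2=M_2\oplus M_2'$, push out along $Y_1\hookrightarrow P_1$, use that $\Ext^1_A(Y_2,P_1)=0$ because $Y_2\in\cmp A$ and $P_1$ is projective to split the resulting middle row, and then compose with $Y_2\hookrightarrow P_2$ so that the cokernel becomes an extension of $X_2$ by $X_1$. These are precisely the paper's diagrams \eqref{eq K3} and \eqref{D2}; only the bookkeeping differs. Your closing remark that condition (b) is stable under adding or stripping projective summands, so that triangles in $\scmp A$ and conflations in $\cmp A$ can be used interchangeably, is a point the paper leaves implicit; it is worth making explicit.

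For (b)$\Rightarrow$(a) your argument is genuinely different and, in my view, more economical. The paper first proves an auxiliary claim (removing a simple top $S(i)$, $i\in I$, from a module satisfying (a) preserves (a), via the pushout to $L_i=P'\oplus\rad P(i)$) and then runs a two-step induction: a base case where $P_M$ is indecomposable, handled by peeling simples off $\rad P(i)$, followed by an induction on the number of indecomposable summands of $P_M$ using a $3\times 3$ diagram. You instead filter $X_M$ by its composition series, whose subquotients are $S(i)$ with $i\in I$, apply the horseshoe lemma inductively to exhibit a (non-minimal) syzygy of $X_M$ as an iterated extension of the modules $\Omega S(i)=\rad P(i)$, pass to $\scmp A$ where the stacked projective covers vanish, and then invoke Schanuel's lemma to identify $Y_M$ with $\Omega X_M$ stably. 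This collapses the paper's case analysis on $P_M$ into a single induction on the composition length of $X_M$, never has to treat $Y_M$ as if it were literally $M$, and makes the syzygy interpretation of condition (b) transparent: the sequence in (b) is just saying that $M$ is, stably, a summand of the syzygy of a module supported on $I$. Both proofs are valid; yours is the shorter and more conceptual of the two.
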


\begin{proof} 
Clearly, if $M=\rad P(i)$ with $i\in I$ then (b) holds with  $Y_M=\rad P(i)$,  
$P_M=P(i)$, and $X_M=S(i)$. Thus to show that (a) implies (b), it suffices to show that condition (b) is preserved under extensions. 
Let $L,N\in \scmp A$ be two objects that satisfy condition (b). Thus there are short exact sequences
\begin{equation}
 \label{eq K1} \xymatrix{0\ar[r]&Y_L  \ar[r]^f&P_L\ar[r]&X_L\ar[r]&0}
\end{equation}
\begin{equation}
 \label{eq K2} \xymatrix{0\ar[r]&Y_N  \ar[r]^g&P_N\ar[r]&X_N\ar[r]&0}
\end{equation}
where $Y_L, Y_N \in \cmp A$ that contain $L, N$ as summands respectively, 
$P_L, P_N$ are projective and $X_L,X_N$ are only supported at vertices of $I$.
Suppose that $L$ and $N$ have an extension

\[\xymatrix{0\ar[r]&L\ar[r]&M\ar[r]&N\ar[r]&0}.\]

We want to show that $M$ also satisfies condition (b).  
 Then in particular, $Y_L$, and $Y_N$ also have an extension 
\begin{equation}
 \label{eq K3}
 \xymatrix{0\ar[r]&Y_L\ar[r]&Y_M\ar[r]&Y_N\ar[r]&0}
\end{equation}
such that $M$ is a summand of $Y_M\in \cmp A$.  In particular, $Y_M \cong M \oplus Y_L/L\oplus Y_N/N$.

 
Consider the following commutative diagram, 
 \[\xymatrix{
 &0\ar[d]&0\ar[d] \\
 0\ar[r]&Y_L \ar[r]\ar[d]_f&
 Y_M
\ar[r]\ar[d]&Y_N \ar[r]\ar[d]^\cong&0\\
 0\ar[r]& P_L
    \ar[r]\ar[d]&E\ar[r]\ar[d]& Y_N \ar[r]&0\\
   &X_L  \ar[r]^\cong\ar[d]&X_L \ar[d]\\
  &0&0}
 \] 
%
where the top row is the sequence (\ref{eq K3}), the top left square is a push out diagram inducing the isomorphism in the third column, and the isomorphism in the third row follows from the snake lemma.
Since $Y_N \in \cmp A$, it has no extensions with projectives. Therefore the sequence in the middle  row splits and hence $E\cong P_L\oplus Y_N$. Thus we obtain the following commutative diagram
\begin{equation}\label{D2}
\xymatrix{
0\ar[r]&Y_M\ \ar[d]^\cong  
\ar[r]^-{\zb}
& P_L\oplus Y_N
\ar[r]
\ar[d]^{\left[ \begin{smallmatrix}1&0\\0&g\end{smallmatrix}\right]}
& X_L\ar[r]\ar[d]^\zg&0
\\
0\ar[r]
&Y_M   \ar[r]^(.33)\za
&P_L\oplus P_N \ar[r] 
& \cok \,\za\ar[r]&0
\\
}
\end{equation}
where the top row is the center column of the previous diagram, the map $g$ is from the sequence (\ref{eq K2}), and 
$\za$ is the composition of $\beta$ with $\left[ \begin{smallmatrix}1&0\\0&g\end{smallmatrix}\right]$. The map $\zg$ exists by the universal property of the cokernel of $\beta$. 
Using the fact that  $g$ is injective with cokernel $X_N$, the snake lemma then implies that $\zg $ is injective and we have  the short exact sequence
\[\xymatrix{0\ar[r]&X_L\ar[r]^\zg&\cok\, \za \ar[r]& X_N\ar[r]&0}.\]
Note that since $X_L, X_N$ are supported only at vertices of $I$ then the same holds for $\cok\, \za$.  Thus, $M$ satisfies condition (b) with $P_M=P_L\oplus P_N$ and $X_M=\cok\, \za$, see the short exact sequence in the second row of diagram \eqref{D2}.  This completes the proof that (a) implies (b).

To show the reverse implication, suppose that $M\in \scmp\,A$ is nonzero.  First we claim that if $M$ is generated by the radicals $\rad P(i), i\in I$ then so is a submodule of $M$ obtained by removing a simple $S(i)$ with $i\in I$ from the top of $M$. This means there exists a short exact sequence of the form ${0\to M'\to M\to S(i)\to 0 }$. We prove the claim as follows. Given $M$, consider the following commutative diagram
\begin{equation}
\label{D3}
\xymatrix{
&0\ar[d] &0\ar[d]\\
0\ar[r] & \Omega M \ar[r]^f \ar[d]_{f_i}& P \ar[r] \ar[d]& M \ar[r] \ar[d]^{\cong}& 0\\
0 \ar[r] & L_i \ar[d]\ar[r] & E \ar[r] \ar[d]& M\ar[r] & 0\\
&M_i\ar[r]^{\cong}\ar[d] & M_i\ar[d]\\
&0&0}
\end{equation}
where the map $P\to M$ in the first row is a minimal projective cover of $M$ with kernel $\Omega M$.  Let $S(i)$ be a direct summand of $\textup{top}\, M$. Then $P\cong P'\oplus P(i)$ with $P'$ projective.  Define $L_i = P'\oplus \rad\, P(i)$.  In particular, $L_i$ is obtained from $P$ by replacing a single indecomposable projective $P(i)$ by its radical. Then $f$ factors through the injective map $f_i: \zO M\to L_i$ and the top left square of the diagram is the push out of the first row along $f_i$.  Observe that the cokernel of $f_i$ is the module $M_i$ obtained from $M$ by removing $S(i)$ from the top.  In particular, there exists a short exact sequence
\[0 \to M_i \to M \to S(i) \to 0.\]
Since $M_i$ is a submodule $M$ then $M_i\in \cmp\,A$.  In particular, $M_i$ has no extensions with projectives and the sequence in the middle column of \eqref{D3} splits.  Thus, $E \cong P\oplus M_i$ and the second row of  \eqref{D3} becomes the following short exact sequence. 
\[0\to L_i \to P\oplus M_i\to M \to 0\]
Since $M\in\cmp A$  has no extensions with projectives, we can remove the projective summand $P'$ from both $L_i$ and $P$ from the above sequence to obtain a short exact sequence as follows. 
\[ 0 \to \rad \,P(i)\to  P(i)\oplus  M_i \to M \to 0\]
This shows the desired claim that if $M$ satisfies condition (a) then so does $M_i$ for $i\in I$.  
Recall that we want to show that (b) implies (a).  We do so now by induction on the number of summands in $P_M$.  If $M$ satisfies (b) with $P_M=P(i)$ with $i\in I$, then $M\subset \rad P(i)$ and thus $M$ can be obtained from $\rad\,P(i)$ by successively removing simple tops supported at vertices of $I$.  More precisely, there exists a sequence of modules for some $t\geq 0$
\[M=M_{i_0}\subset M_{i_1}\subset \dots \subset M_{i_t}=\rad P(i)\]
such that the successive quotients $M_{i_j}/M_{i_{j-1}}\cong S(i_j)$ where $i_j\in \Supp X_M\subset I$.  Since $\rad P(i)$ satisfies (a), then recursively applying the claim shows that $M_{i_{t-1}}, \dots, M_{i_0}$ also satisfy (a).  This shows the base case that if $M$ satisfies (b) with $P_M$ indecomposable then $M$ satisfies (a).  

Now, suppose that (b) implies (a) for all modules $N\in \scmp\,A $ such that $P_N$ consists of at most $k\geq 1$ indecomposable summands.  Let $M$ satisfy (b) where $P_M$ consists of $k+1$ indecomposable summands.  Then consider the following diagram
\[
\xymatrix{
&0\ar[d] & 0 \ar[d] & 0\ar[d]\\
0\ar[r] & M'' \ar[r]^{f''} \ar[d]& P(i) \ar[r] \ar[d]& X'' \ar[r] \ar[d]& 0\\
0\ar[r] &  Y_M \ar[r]^f \ar[d]^\pi& P_M \ar[r] \ar[d]^{\pi_P}& X_M \ar[r] \ar[d]^{\pi_X}& 0\\
0\ar[r] & M' \ar[r]^{f'} \ar[d]& P_{M'} \ar[r] \ar[d] & X' \ar[r] \ar[d]& 0\\
& 0 & 0 & 0
}
\]
where the sequence for $M$ appears in the second row.  Here $P_M \cong P_{M'}\oplus P(i)$ for some indecomposable projective $P(i)$ with $i\in I$, which gives the split short exact sequence in the second column of the diagram.  Let $M'=\im\, \pi_P f$, then there exists a surjective map $\pi: Y_M\to M'$ such that the bottom left square commutes.  Let $X'=\cok\, f'$, then the map $\pi_X$ exists by the universal property of the cokernel of $f$.  Note that since $\pi, \pi_P$ are surjective then so is $\pi_X$.  Then the first row of the diagram is obtained by taking kernels of $\pi, \pi_P, \pi_X$.  Note that since $X_M$ is supported only at vertices of $I$, then so are $X', X''$.  In particular, the modules $M', M''$ satisfy condition (b) with  $Y_{M'}=M', Y_{M''}=M''$ and $P_{M'}, P_{M''}=P(i)$ consisting of at most $k$ indecomposable summands.  Hence, by the induction assumption $M', M''$ are generated by radicals $\rad P(i), i\in I$.  Then the first column of the diagram implies that $M$,  which is a summand of $Y_M$, is also generated by these radicals.  Thus $M$ satisfies (a).  This completes the proof that (b) implies (a). 
\end{proof}

\begin{thm}\label{prop:genrad} Let $A$ be an Iwanaga-Gorenstein algebra of Gorenstein dimension at most 1. Then the radicals of the projective $A$-modules generate $\scmp\,A$. 
\end{thm}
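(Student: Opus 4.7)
The plan is to deduce the theorem directly from Lemma~\ref{genrad} by taking the index set $I$ to be the full set $Q_0$ of vertices of the Gabriel quiver of $A$. With this choice, the condition in part (b) of the lemma that $X_M$ be supported only at vertices of $I$ becomes vacuous, so my task reduces to exhibiting, for every $M\in\scmp A$, a short exact sequence
\[
0 \longrightarrow Y_M \longrightarrow P_M \longrightarrow X_M \longrightarrow 0
\]
with $Y_M\in\cmp A$ containing $M$ as a summand and $P_M$ projective.

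To produce this sequence, I will invoke the characterization already recalled in the introduction of the paper: over an Iwanaga--Gorenstein algebra of Gorenstein dimension at most one, the Cohen--Macaulay modules coincide with the syzygies. In particular, any $M\in\cmp A$ embeds into some projective $A$-module $P$, yielding a short exact sequence
\[
0 \longrightarrow M \longrightarrow P \longrightarrow P/M \longrightarrow 0.
\]
Setting $Y_M=M$, $P_M=P$, and $X_M=P/M$ realizes condition (b) of Lemma~\ref{genrad} with $I=Q_0$: clearly $M$ is a summand of $Y_M=M$, and $X_M$ automatically satisfies the (trivial) support restriction.

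Applying Lemma~\ref{genrad} then immediately gives condition (a), namely that $M$ lies in $\overline{\{\rad P(i)\mid i\in Q_0\}}$, i.e., that $M$ is generated under extensions and direct summands by the radicals $\rad P(i)$. Since $M$ was an arbitrary object of $\scmp A$, this proves that $\scmp A$ is generated by the collection of radicals of indecomposable projective $A$-modules.

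I do not expect any substantial obstacle here: the combinatorial heart of the argument, the induction on the number of indecomposable projective summands of $P_M$ together with the top-removal trick, has already been carried out inside Lemma~\ref{genrad}. The only external input used is the syzygy-versus-Cohen--Macaulay equivalence in Gorenstein dimension one, which is a standard consequence of the definitions and has already been cited in the introduction.
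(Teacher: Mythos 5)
Your proposal is correct and matches the paper's proof: both take $I=Q_0$ in Lemma~\ref{genrad}, observe that the support condition becomes vacuous, and use the fact that Cohen--Macaulay modules over a $1$-Iwanaga--Gorenstein algebra are syzygies (hence submodules of projectives) to exhibit the required short exact sequence with $Y_M = M$. You simply spell out the short exact sequence $0\to M\to P\to P/M\to 0$ that the paper leaves implicit.
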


\begin{proof}
Take $I=Q_0$, then any $M\in\scmp\,A$ is a submodule of a projective, so it satisfies condition (b) of Lemma~\ref{genrad}.  Then the lemma implies that $M$  is generated by the radicals. 
\end{proof}

\begin{remark}
The following alternative proof of Theorem~\ref{prop:genrad} was communicated to us by Martin Kalck and independently by Xiaofa Chen.  Observe that $\text{mod}\, A$ is generated by the simple modules $S(i)$ under extensions. By the Horseshoe Lemma any short exact sequence \[0 \to X \to Y \to Z \to 0\] 
in $\text{mod}\,A$ yields a short exact sequence in $\cmp\,A = \Omega (\text{mod}\,A)$ 
\[0 \to \Omega X \to \Omega Y \oplus P \to \Omega Z \to 0\] 
for some projective $A$-module $P$.  This implies that $\cmp\,A$ is generated by the $\Omega S(i)\cong \rad\, P(i)$.  Moreover, the same argument can be used to show that (b) implies (a) in Lemma~\ref{genrad}.  Note that Lemma~\ref{genrad} is needed in the proof of Corollary~\ref{corintro}.
\end{remark}

\section{Reduction of 2-Calabi-Yau tilted algebra}
\label{sect 3}
 Let  $A$ be a (finite-dimensional) 2-Calabi-Yau tilted algebra over an algebraically closed field $\kb$. Let $A=\kb Q/I$ be its description as the quotient of a path algebra with quiver $Q$ and admissible ideal $I$. The notation $Q_0$ and $ Q_1$ stands for the vertices and arrows of $Q$, respectively. 
It was shown in \cite{KR} that $A$ is Iwanaga-Gorenstein of Gorenstein dimension 1 and its stable syzygy category $\scmp\,A$ is a 3-Calabi-Yau triangulated category with inverse shift given by $\Omega$.

Let $e_i$ be a primitive idempotent at vertex $i$, and we denote by $J=Ae_iA$ the two-sided ideal of $A$ generated by $e_i$.    Note that $J$ is also a module over $A$.   Now, consider the quotient algebra $B=A/J$.  Observe that every $B$-module is also an $A$-module where the elements of $J$ act trivially.  Moreover, there exists a short exact sequence 
\begin{equation}\label{eq1}
0\to J \to A \to B\to 0.
\end{equation}

Note that the quiver of $B$ is obtained from that of $A$ by removing the vertex $i$ and the relations of $B$ are obtained from the relations of $A$ by replacing by zero every monomial that corresponds to a path through the vertex $i$.
An example \ref{ex1} is given at the end of this section.

We begin with a preliminary lemma about the properties of $J$. This result holds in the more general setting of 1-Iwanaga-Gorenstein algebras.

\begin{lemma}\label{lem:J}
Let $A$ be a be an Iwanaga-Gorenstein algebra of Gorenstein dimension 1, and let $B = A/J$, where $J=Ae_iA$.  Then the right $A$-module $J$ satisfies the following.
\begin{itemize}
\item[(a)] $J\in \cmp\,A$,
\item[(b)]  $J\cong \sum_w wA$, where the sum is over all (residue classes of) paths $w\in A$ that end in i,
\item[(c)] $J\cong\oplus_{j\in Q_0} e_jJ,$
\item[(d)] $P_A(i)\in \add\, J$,
\item[(e)] $\textup{top}\,J \in \textup{add}\,S(i)$,
\item[(f)] $\Hom_A(J, M_B)=0$ for every $M_B\in \textup{mod}\,B$, 
\item[(g)] $\Ext^1_A(J,J)=0$.
\end{itemize}
\end{lemma}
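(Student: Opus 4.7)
The plan is to establish the seven parts in the order (c), (d), (e), (f), (a), (g), (b), so that earlier parts feed into later ones. Parts (c) through (f) are structural and follow from the idempotent decomposition together with the fact that $J=Ae_iA$ is generated as a right $A$-module by $Ae_i$. Parts (a) and (g) are short $\Ext$ calculations based on the fundamental short exact sequence $0\to J\to A\to B\to 0$. Part (b) is the most delicate and will be left for last.

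For (c), the identity $1=\sum_{j\in Q_0}e_j$ with orthogonal idempotents gives the Peirce decomposition $J=\bigoplus_j e_jJ$ of $J$ as a right $A$-module (well-defined because $J$ is two-sided). For (d), note that $e_i=e_i\cdot e_i\cdot e_i\in Ae_iA=J$, so $P(i)=e_iA\subseteq J$; the computation $e_iJ=e_iAe_iA=e_iA$ then identifies $P(i)$ with the summand $e_iJ$ from (c), yielding $P(i)\in\add J$. For (e), the generators $\{ae_i:a\in A\}$ of $J$ as a right $A$-module produce a surjection $\bigoplus_\alpha P(i)\twoheadrightarrow J$ by choosing a $\kb$-basis of $Ae_i$ and mapping the $\alpha$-th copy of $e_i$ to the $\alpha$-th basis vector; taking tops gives $\textup{top}\,J$ as a quotient of a direct sum of copies of $S(i)$, hence $\textup{top}\,J\in\add S(i)$. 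For (f), any $B$-module $M_B$ satisfies $M_B\cdot J=0$ and hence $M_Be_i=0$, so for $f\in\Hom_A(J,M_B)$ and any generator $ae_i\in J$ we have $f(ae_i)=f(ae_i)e_i=0$, forcing $f=0$.

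For (a), apply $\Hom_A(-,A)$ to the sequence $0\to J\to A\to B\to 0$. Since $A_A$ is projective, $\Ext^i(A,A)=0$ for $i\geq 1$, and the $1$-Gorenstein hypothesis implies that the injective dimension of $A_A$ is at most $1$, so $\Ext^i(-,A)=0$ for all $i\geq 2$. The long exact sequence then identifies $\Ext^1(J,A)$ with $\Ext^2(B,A)=0$, i.e.\ $J\in\cmp A$. For (g), apply $\Hom_A(J,-)$ to the same sequence and use the segment $\Hom(J,B)\to\Ext^1(J,J)\to\Ext^1(J,A)$: the left end vanishes by (f) and the right end by (a), so $\Ext^1(J,J)=0$.

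The main obstacle is (b). My plan is to choose a set of path residues $\{w_\alpha\}$ ending at $i$ whose images form a basis of $Ae_i$ as a right $e_iAe_i$-module, and then show that the multiplication map $\mu\colon Ae_i\otimes_{e_iAe_i}e_iA\to J$ is an isomorphism of $(A,A)$-bimodules. Once $\mu$ is known to be an isomorphism, freeness of the chosen basis yields $Ae_i\otimes_{e_iAe_i}e_iA\cong\bigoplus_\alpha e_iA$ as right $A$-modules, which transports across $\mu$ to the desired decomposition $J\cong\bigoplus_\alpha w_\alpha A$. The hard step is the injectivity of $\mu$, where the subtleties involving cycles or loops through $i$ enter; I expect to handle it by a dimension count against the Peirce decomposition from (c) together with the explicit bimodule structure of $Ae_i$, or alternatively by a direct inductive argument filtered by path length in $J$.
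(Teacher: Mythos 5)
Parts (c), (d), (e), (f), (a), (g) are all correct. They differ from the paper only in minor ways: you prove (a) by applying $\Hom_A(-,A)$ to $0\to J\to A\to B\to 0$ and using $\Ext^2_A(-,A)=0$ (from $\operatorname{injdim} A_A\le1$), whereas the paper simply observes that $J$ is a submodule of the projective module $A_A$ and hence Cohen--Macaulay; you derive (e) from a surjection $\bigoplus_\alpha P(i)\twoheadrightarrow J$ and (f) from $M_Be_i=0$, rather than from (b). These reroutings are sound and, usefully, make (e) and (f) independent of (b).

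The gap is in (b), and the proposed route cannot be repaired. You plan to show that the multiplication map $\mu\colon Ae_i\otimes_{e_iAe_i}e_iA\to J$ is an isomorphism and that $Ae_i$ is a free right $e_iAe_i$-module; neither holds in general. The paper's own Example~\ref{ex1} is a counterexample: there $e_5Ae_5=\kb$, $\dim_\kb Ae_5=3$, $\dim_\kb e_5A=2$, so $\dim_\kb\bigl(Ae_5\otimes_{e_5Ae_5}e_5A\bigr)=6$, whereas $\dim_\kb J=5$; explicitly $\mu(\delta\sigma\otimes\tau)=\delta\sigma\tau=\delta\epsilon\alpha=0$ since $\sigma\tau=\epsilon\alpha$ and $\delta\epsilon=0$. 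More structurally, if $\mu$ were an isomorphism and $Ae_i$ were free over $e_iAe_i$, then $\mu$ restricted to each summand $e_iA$ would be injective, forcing $w_\alpha A\cong P(i)$ for every $\alpha$ and hence $J\cong P(i)^{\oplus k}$; but in the example $J\cong P(5)\oplus P(5)\oplus S(5)$, which has a non-projective summand. The obstruction is exactly the ``subtlety'' you flag: a nonzero path ending at $i$ can compose to zero with a nonzero path starting at $i$, putting a nontrivial kernel in $\mu$. The paper's argument for (b) is instead a direct path-bookkeeping one: fix a $\kb$-basis $\mathcal{B}$ of $A$ consisting of residues of paths, observe that $J$ is spanned by $\{wa : w\in\mathcal{B}\cap Ae_i,\ a\in A\}$, so $J=\sum_w wA$ over basis paths $w$ ending at $i$, and then argue this sum is a direct sum of right $A$-modules. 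Your fallback of a ``direct inductive argument filtered by path length'' is closer in spirit to that, but as written your primary plan rests on a claim ($\mu$ an isomorphism) that is false.
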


\begin{proof}
By \eqref{eq1} the module $J$ is a submodule of a projective $A$-module $A_A$, which shows (a).  

Let $\mathcal{B}(A)$ be a basis of  $A$ whose elements are paths in $Q$. The elements of $J$ are $k$-linear combinations of the elements of the form $we_ia$ with $w\in \mathcal{B}(A), a\in A$, and $we_i$ is zero unless $w$ ends in $i$. Thus 
$J\cong \sum_w wA$ as $k$-vector spaces. Since the right $A$-module structures respect this decomposition, this proves (b).

(c) holds because the identity $1_A$ is equal to the sum of the idempotents $1_A=\sum_{i\in Q_0} e_i$,  and 
(d) follows from (b) when we take $w=e_i$.



To show (e), observe that for every path $w$ ending in $i$ the map $e_i a\mapsto wa$ gives a surjective morphism $e_iA\to wA$ in $\textup{mod}\,A$. Hence $\textup{top} \,wA=S(i)$, and  part (b) implies that $\textup{top} \, J\in \textup{add}\,S(i)$.

Part (f) follows from (e) since no $B$-module is supported at vertex $i$. 

To show the last part, apply $\Hom_A(J, -)$ to \eqref{eq1} to obtain the following exact sequence. 
\[ \Hom_A(J, B) \to \Ext^1_A(J,J) \to \Ext^1_A(J, A)\]
Note that the first term is zero by (f) and the last term is zero by (a), which implies that $\Ext^1_A(J,J)=0$. 
\end{proof}

The next result shows that the reduction preserves the 2-Calabi-Yau tilted property.

\begin{prop}\label{prop 22}
If the algebra $A$ is 2-Calabi-Yau tilted then $B=A/J$ is also 2-Calabi-Yau tilted. 
\end{prop}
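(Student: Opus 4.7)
The plan is to apply the Iyama--Yoshino reduction of a 2-Calabi--Yau triangulated category at a summand of a cluster-tilting object.

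Write $A=\End_{\calc}(T)$ where $T$ is a basic cluster-tilting object in a 2-Calabi--Yau triangulated category $\calc$, and decompose $T=T_i\oplus T'$ so that the primitive idempotent $e_i\in A$ corresponds to the summand $T_i$. I would first identify the quotient ring explicitly: since
\[
A=\End_{\calc}(T_i\oplus T')=\begin{pmatrix}\End_{\calc}(T_i)&\Hom_{\calc}(T',T_i)\\ \Hom_{\calc}(T_i,T')&\End_{\calc}(T')\end{pmatrix},
\]
the two-sided ideal $J=Ae_iA$ kills the row and column indexed by $T_i$ and, in the $(T',T')$-block, cuts out precisely the endomorphisms of $T'$ that factor through $\add T_i$. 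Therefore
\[
B=A/J\;\cong\;\End_{\calc}(T')/(T_i),
\]
where $(T_i)$ denotes the ideal of morphisms factoring through $\add T_i$.

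Next I would invoke Iyama--Yoshino reduction. Let
\[
\calz=\{X\in\calc\mid \Hom_{\calc}(T_i,X[1])=0\}=\{X\in\calc\mid \Hom_{\calc}(X,T_i[1])=0\},
\]
where the second equality uses the 2-Calabi--Yau property of $\calc$. By \cite[Thm.~4.2]{IY} (Iyama--Yoshino), the additive quotient $\calz/[\add T_i]$ carries a canonical triangulated structure that is again 2-Calabi--Yau, with the triangles coming from certain triangles of $\calc$ whose outer terms lie in $\calz$. Since $T$ is cluster-tilting in $\calc$ we have $\Ext^1_{\calc}(T_i,T')=0=\Ext^1_{\calc}(T',T_i)$, so $T'\in\calz$, and the same theorem asserts that the image of $T'$ is a basic cluster-tilting object in $\calz/[\add T_i]$.

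Finally I would compute the endomorphism algebra of $T'$ in $\calz/[\add T_i]$: morphisms in the reduction are just morphisms in $\calc$ (since $T'\in\calz$) modulo those factoring through $\add T_i$, hence
\[
\End_{\calz/[\add T_i]}(T')=\End_{\calc}(T')/(T_i)\cong B.
\]
Thus $B$ is the endomorphism algebra of a cluster-tilting object in a 2-Calabi--Yau triangulated category, i.e.\ 2-Calabi--Yau tilted.

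The only delicate step is the middle one: one must verify that the ambient category $\calc$ admits Iyama--Yoshino reduction in the form needed here (for $A$ arising e.g.\ as a Jacobian algebra of a quiver with potential, $\calc$ may be the generalized cluster category of Amiot, which is Hom-finite in the cases of interest), and that the image of $T'$ is basic and remains cluster-tilting. Once the hypotheses of \cite{IY} are matched to the present setup, the remaining identifications are formal. If Hom-finiteness of $\calc$ is in doubt in full generality, an alternative is to argue directly at the level of the stable category $\scmp A$: using that $\scmp A$ is the cluster category (up to a known shift) and that Theorem~\ref{thm:eq} will give an equivalence $\underline{J^\perp/(J)}\cong \scmp B$, one can then realize a cluster-tilting object in $\scmp B$ whose endomorphism algebra is $B$.
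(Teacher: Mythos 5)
Your proposal is essentially the same as the paper's proof: both realize $B$ as the endomorphism algebra of (the image of) $T'=T/T_i$ in the Iyama--Yoshino reduction $T_i^\perp/(T_i)$, which is again 2-Calabi--Yau with $T'$ remaining cluster-tilting (the paper cites \cite[Thms.\ 2.9 and 4.7]{IY}). Your explicit matrix-algebra identification of $A/Ae_iA$ with $\End_{\mathcal{C}}(T')/(T_i)$ makes one step more detailed, and your closing caveat about Hom-finiteness is already built into the paper's standing hypothesis that $\mathcal{C}$ is Hom-finite Krull--Schmidt, so there is no genuine gap.
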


\begin{proof}
Since $A$ is a 2-Calabi-Yau tilted algebra, then by definition $A = \End_{\mathcal{C}}(T)$ where $\mathcal{C}$ is a (hom-finite, Krull-Schmidt) 2-Calabi-Yau category and $T$ is a basic cluster-tilting object in $\mathcal{C}$.  Let $T_i$ be an indecomposable direct summand of $T$. Consider the full subcategory of $\mathcal{C}$ 
\[T_i^\perp = \{X\in \mathcal{C} \mid \Ext^1_{\mathcal{C}}(T_i, X)=0\}\]
consisting of objects that have no extensions with $T_i$.  By \cite[Theorem 4.7]{IY}, the category $\mathcal{C'} = T_i^\perp/(T_i)$, obtained from $T_i^\perp$ by factoring by the ideal of morphisms that factor through $\add\,T_i$, is again a 2-Calabi-Yau category.   Note that the cluster tilting object $T\in T_i^\perp$.  Moreover,  by \cite[Theorem 4.9]{IY}, the object $T\in \mathcal{C}$ yields a cluster tilting object $T/T_i \in \mathcal{C'}$.  Then we have the following sequence of isomorphisms 
\[ \End_{\mathcal{C'}}(T/T_i) \cong \End_{\mathcal{C}}(T/T_i)/(T_i)\cong \End_{\mathcal{C}}(T)/(T_i) = A/Ae_iA = B.\]
In particular, it shows that $B$ is an endomorphism algebra of a cluster tilting object in a 2-Calabi-Yau category.  Therefore, $B$ is a 2-Calabi-Yau tilted algebra. 
\end{proof}

We consider the following perpendicular subcategory of $J$ inside $\cmp\,A$
\[J^\perp = \{ X \in \cmp\,A\mid \Ext^1_A(J,X)=\Ext^1_A (X,J)=0\}.\]

\begin{remark}
By the 3-Calabi-Yau property of $\scmp\, A$ we also have 
\[J^\perp = \{ X \in \cmp\,A\mid \Ext^1_A(J,X)=\Ext^2_A (J,X)=0\}.\] 
\end{remark}

\begin{prop}
 \label{new prop 1.3} 
 With the notation above, we have
 \begin{itemize}
 \item[(a)] $A_A\in J^\perp.$
 \item[(b)] $J\in J^\perp$.
\end{itemize}
\end{prop}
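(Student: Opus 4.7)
The plan is to deduce both parts directly from Lemma \ref{lem:J} together with the basic fact that $A_A$ is projective and that $\cmp A$ consists of modules with no first extension into $A$. Essentially, all the work has already been done in the preliminary lemma, and this proposition is a packaging of those facts into the definition of $J^\perp$.

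For part (a), I would note three things. First, $A_A$ is a free $A$-module, hence trivially lies in $\cmp A$. Second, $\Ext^1_A(A_A,J)=0$ since $A_A$ is projective, so projective cover of any module splits against it. Third, for $\Ext^1_A(J,A_A)$ we invoke Lemma \ref{lem:J}(a), which says $J\in \cmp A$; by the very definition of $\cmp A$ this gives $\Ext^1_A(J,A_A)=0$. So both perpendicularity conditions hold and $A_A\in J^\perp$.

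For part (b), I would observe that the two Ext-vanishing conditions defining $J^\perp$ collapse to the single condition $\Ext^1_A(J,J)=0$, which is precisely Lemma \ref{lem:J}(g). Membership of $J$ in $\cmp A$ is Lemma \ref{lem:J}(a). Thus $J\in J^\perp$.

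The only thing one might want to double-check is that I am reading the definition of $J^\perp$ correctly (it asks for vanishing on both sides), but this is not really an obstacle. In summary, there is no serious obstacle: the proposition is a bookkeeping statement that records consequences of Lemma \ref{lem:J} in the form needed for the subsequent equivalences $J^\perp/(J)\cong \cmp B$ and $\underline{J^\perp/(J)}\cong \scmp B$ announced in Theorem \ref{thmintro1}.
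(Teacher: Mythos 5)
Your proof is correct and follows essentially the same route as the paper, which simply cites Lemma~\ref{lem:J}(a) for part (a) and Lemma~\ref{lem:J}(g) for part (b); you spell out the remaining (trivial) verifications — that $A_A$ is projective hence Cohen–Macaulay with $\Ext^1_A(A_A,J)=0$ — which the paper leaves implicit.
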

\begin{proof}
 (a) follows from part (a) of Lemma~\ref{lem:J} and (b) from part (g) of the same lemma.
\end{proof}

\begin{definition}\label{def:F}
We define a functor 
\[F: J^\perp \to \cmp\,B\] 
 as follows.  Given an object $X\in J^\perp$ set $F(X)= X/\text{im}\,f_X$, where $f_X: J_X\to X$ is an $\add\,J$ approximation of $X$.  Note that the existence of this approximation is given by the fact that $J$ has only a finite number of indecomposable direct summands, by Lemma~\ref{lem:J}(b). Moreover, given a morphism $g: M\to N$ in $J^\perp$ the morphism $F(g)$ is defined as the unique map such that the following diagram commutes. 
\[\xymatrix{M \ar[r] \ar[d]^g& F(M) \ar[d]^{F(g)} \\
N \ar[r] & F(N)}
\]
\end{definition}
 \begin{remark} \label{rem ses}
 In particular, we have the following short exact sequence in $\textup{mod}\,A$,
 \[ \xymatrix{0\ar[r]& \im f_X \ar[r] &X\ar[r]&F(X)\ar[r]&0}\]
 with $f_X\colon J_X\to X$ an $\add \,J$ approximation.
\end{remark}

\begin{prop}
The functor $F$ is well-defined. 
\end{prop}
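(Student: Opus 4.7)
Well-definedness comprises three claims: that $F(X)=X/XJ$ is intrinsic (independent of the choice of $\add J$-approximation $f_X$) and lies in $\cmp\,B$; that each morphism $g$ in $J^\perp$ descends to a well-defined morphism $F(g)$; and that the resulting assignment preserves identities and composition. My plan is to settle the structural items directly from Lemma~\ref{lem:J}, and then focus on the Cohen-Macaulay condition.

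For the identification $\im f_X = XJ$: Lemma~\ref{lem:J}(b) writes every indecomposable summand of any object in $\add\,J$ as $wA$ with $we_i = w$, so any morphism $h\colon wA\to X$ satisfies $h(w) = h(we_i) = h(w)e_i\in Xe_i$, and thus $h(wA)\subseteq Xe_iA = XJ$; hence $\im f_X\subseteq XJ$. Conversely, every element of $XJ = Xe_iA$ lies in the image of some map $P(i)\to X$, and $P(i)\in\add\,J$ by Lemma~\ref{lem:J}(d), so this map factors through $f_X$, giving $XJ\subseteq\im f_X$. In particular $F(X)$ is annihilated by $J$ and lies in $\textup{mod}\,B$. $A$-linearity of any $g\colon M\to N$ in $J^\perp$ gives $g(MJ)\subseteq NJ$, so $g$ descends uniquely to a $B$-linear $F(g)\colon F(M)\to F(N)$; functoriality is immediate from the universal property of the quotient.

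The central point is $F(X)\in\cmp\,B$, that is, $\Ext^1_B(F(X), B)=0$. I first establish $\Ext^1_A(X, B)=0$ by applying $\Hom_A(X,-)$ to $0\to J\to A\to B\to 0$ and inspecting the long exact sequence: the hypotheses $X\in\cmp\,A$ and $X\in J^\perp$ give $\Ext^1_A(X, A)=\Ext^1_A(X, J)=0$; the Iwanaga-Gorenstein condition of dimension one gives $\Ext^2_A(X, A)=0$; and the 3-Calabi-Yau duality of $\scmp\,A$ (Proposition~\ref{prop 22} and \cite{KR}) together with $\Ext^1_A(J, X)=0$ gives $\Ext^2_A(X, J)\cong D\,\Ext^1_A(J, X)=0$. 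These four vanishings force $\Ext^1_A(X, B)=0$.

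Finally, I take an arbitrary short exact sequence $0\to B\to E\to F(X)\to 0$ in $\textup{mod}\,B$, view it in $\textup{mod}\,A$, and pull it back along the canonical quotient $q\colon X\to F(X)$ to an $A$-linear extension $0\to B\to E'\to X\to 0$. Its class in $\Ext^1_A(X, B)=0$ vanishes, so the pulled-back sequence splits; a section $X\to E'$ composes with the first projection $E'\to E$ to yield an $A$-linear lift $s\colon X\to E$ of $q$. Because $E$ is a $B$-module, $s$ annihilates $XJ$ and factors as $s=\bar s\circ q$ for a $B$-linear $\bar s\colon F(X)\to E$, which is the desired section of $E\to F(X)$. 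Thus the original sequence splits in $\textup{mod}\,B$, giving $\Ext^1_B(F(X), B)=0$ and hence $F(X)\in\cmp\,B$. I expect the subtlest ingredient to be the vanishing $\Ext^2_A(X, J)=0$: this is where the 3-Calabi-Yau duality genuinely enters, and it cannot be replaced by the bare definition of $J^\perp$.
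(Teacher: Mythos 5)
Your proof is correct, and while the overall strategy is parallel to the paper's (reduce $F(X)\in\cmp\,B$ to the vanishing $\Ext^1_A(X,B)=0$ via the 3-Calabi-Yau duality $\Ext^2_A(X,J)\cong D\Ext^1_A(J,X)$), you depart from the paper's argument in two welcome ways. First, you identify $\im f_X$ intrinsically as $XJ$; the paper instead works with a chosen $\add\,J$-approximation and never explicitly records that the image is independent of that choice, so your observation both settles well-definedness on objects and makes well-definedness on morphisms immediate from $A$-linearity ($g(MJ)\subseteq NJ$), whereas the paper argues via a lift $u\colon J_M\to J_N$ and the universal property of the cokernel. Second, for the final step $\Ext^1_B(F(X),B)=0$ the paper applies $\Hom_A(-,B)$ to $0\to\im f_X\to X\to F(X)\to 0$ and needs the vanishing $\Hom_A(\im f_X,B)=0$ (from $\textup{top}\,\im f_X\in\add\,S(i)$); you instead split an arbitrary extension directly by pulling it back along $q\colon X\twoheadrightarrow F(X)$, using only $\Ext^1_A(X,B)=0$ and the fact that a lift $s\colon X\to E$ into a $B$-module kills $XJ$ and hence factors through $F(X)$. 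Both routes are sound; yours is slightly more elementary and self-contained. Two cosmetic notes: the reference for the 3-Calabi-Yau property should be to \cite{KR} rather than Proposition~\ref{prop 22} (the latter concerns $B$ being 2-Calabi-Yau tilted); and the vanishings $\Ext^1_A(X,J)=0$ and $\Ext^2_A(X,A)=0$ you list are not actually used in extracting $\Ext^1_A(X,B)=0$ from the long exact sequence.
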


\begin{proof}
First, we show that $F$ is well-defined on morphisms, meaning that there exists $F(g)$ as in Definition~\ref{def:F}.
Consider the diagram below, 
\[\xymatrix{J_M \ar[r]^{f_M} \ar@{..>}[d]^{u}& M \ar[r] \ar[d]^g& F(M) \ar[r] \ar@{..>}[d]^{F(g)}& 0 \\
J_N \ar[r]^{f_N} & N \ar[r] & F(N) \ar[r] & 0}
\]
where the rows come from the $\add\,J$ approximations of $M,N$ respectively and are exact.  Since $f_N$ is an approximation of $N$ by $\add\,J$ then there exists a map $u: J_M\to J_N$ such that $f_Nu= gf_M$.  Now, since the first square commutes and the rows are exact, the universal property of $\text{cok}\,f_M$ yields a unique map $F(g)$ such that the diagram commutes.

Now, we show that $F$ is well-defined on the objects.  Observe that since $P_A(i)$ is a summand of $J$ by Lemma~\ref{lem:J}(d), then an $\add\,J$ approximation of any $A$-module $X$ will have an image that contains the vector space of $X$ at vertex $i$.  Thus, $F(X)$, which is defined as the cokernel of such an approximation, is not supported at vertex $i$.  This shows that $F(X)\in \text{mod}\,B$, and it remains to prove that $F(X)\in \cmp\,B$. 

Let $X\in J^\perp$, and apply $\Hom_A(X, - )$ to the short exact sequence \eqref{eq1} to obtain a long exact sequence as follows. 
\[\Ext^1_A(X, A) \to \Ext^1_A(X, B) \to \Ext^2_A(X, J)\]
Observe that the first term is zero because $X\in J^\perp \subset \cmp\,A$.  On the other hand, the last term  $\Ext^2_A(X, J)$ is isomorphic to $D\Ext^1_A(J, X)$ by the the 3-Calabi-Yau property of $\cmp\,A$, and this last term is zero, because  $X\in J^\perp$.   This shows that $\Ext^1_A(X, B)=0$.  

Now, consider the short exact sequence 
\[0\to \text{im}\,f_X\to X \to F(X)\to 0\]
in $\text{mod}\,A$ where $f_X$ is the $\add\,J$ approximation of $X$.   Applying $\Hom_A(-,B)$ yields an exact sequence 
\[\Hom_A(\text{im}\,f_X, B)\to \Ext^1_A(F(X),B)\to \Ext^1_A(X,B).\]
The last term is zero by the above.  Now, consider the first term.  By definition the module $\text{im}\,f_X$ is a quotient of a module in $\add\,J$.  Thus, by 
Lemma~\ref{lem:J}(e)
 we have that $\text{top}\, \text{im}\,f_X \in \add\,S(i)$.  Since the module $B$ is not supported at vertex $i$, it follows that $\Hom_A(\text{im}\,f_X, B)=0$.  Therefore, $\Ext^1_A(F(X),B)=0$ which implies that $\Ext^1_B(F(X),B)=0$.  This shows that $F(X)\in \cmp\,B$.  
\end{proof}

\smallskip

Our next goal is to show that the functor $F$ is full and dense. First we need a few preparatory lemmata.

\begin{lemma}\label{lem:Fproj} On projective $A$-modules the functor $F$ acts as follows.
\begin{enumerate}
\item[(a)] $F(P_A(j))=P_B(j)$ if $j\ne i$; 
\item[(b)] $F(P_A(i))=0$.
\end{enumerate}
\end{lemma}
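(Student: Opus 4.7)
The plan is to dispose of (b) by a one-line observation and then focus on (a) by identifying a canonical $\add J$-approximation for $P_A(j)$.

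For (b), Lemma~\ref{lem:J}(d) tells us $P_A(i)\in\add J$, so the identity map $P_A(i)\to P_A(i)$ is itself an $\add J$-approximation. Hence $\im f_{P_A(i)}=P_A(i)$ and $F(P_A(i))=0$. Of course one first notes $P_A(i)\in J^\perp$: $\Ext^1_A(P_A(i),J)=0$ by projectivity, and $\Ext^1_A(J,P_A(i))=0$ because $J\in\cmp A$ (Lemma~\ref{lem:J}(a)).

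For (a), fix $j\neq i$. The same argument as above shows $P_A(j)\in J^\perp$. By Lemma~\ref{lem:J}(c), the right $A$-module $e_jJ$ is a direct summand of $J$, hence $e_jJ\in\add J$. I will show that the natural inclusion $\iota\colon e_jJ\hookrightarrow e_jA=P_A(j)$ is an $\add J$-approximation. Given any $g\colon J'\to P_A(j)$ with $J'\in\add J$, I need $\im g\subseteq e_jJ$. Since $\textup{top}\,J\in\add S(i)$ by Lemma~\ref{lem:J}(e), the same holds for $J'$ and therefore for the quotient $\im g$. Lifting a basis of $\textup{top}(\im g)\in\add S(i)$ to generators of $\im g$, and using that $S(i)\cdot e_i=S(i)$, one can choose these generators inside $(\im g)\cdot e_i\subseteq e_jAe_i$. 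Finally, $e_jAe_i\subseteq e_j(Ae_iA)=e_jJ$, so the generators lie in $e_jJ$ and thus $\im g\subseteq e_jJ$, proving that $\iota$ is an $\add J$-approximation. Consequently
\[F(P_A(j))=P_A(j)/\im\iota=e_jA/e_jJ\cong e_j(A/J)=e_jB=P_B(j),\]
as required.

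The only step that is not a bookkeeping check is the verification that every morphism from $\add J$ into $P_A(j)$ lands in $e_jJ$; this is where Lemma~\ref{lem:J}(e) is put to work, via the simple observation that a submodule of $e_jA$ whose top is concentrated at vertex $i$ must be generated by elements of $e_jAe_i\subseteq e_jJ$. Everything else is the standard identification $e_jA/e_jJ\cong e_jB$ together with Lemma~\ref{lem:J}(c),(d).
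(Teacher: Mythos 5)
Your proof is correct and follows the same route as the paper: part (b) is handled by noting $P_A(i)\in\add J$ (Proposition~\ref{new prop 1.3}(b), equivalently Lemma~\ref{lem:J}(d)) so the identity is the approximation, and part (a) comes down to identifying $e_jJ\hookrightarrow e_jA$ as the relevant $\add J$-approximation and recognizing the cokernel $e_jA/e_jJ\cong e_jB=P_B(j)$, which is exactly what multiplying the sequence $0\to J\to A\to B\to 0$ by $e_j$ gives. The one thing you do that the paper elides is the explicit check that every map $J'\to P_A(j)$ with $J'\in\add J$ has image inside $e_jJ$ — your argument via $\textup{top}\,J'\in\add S(i)$ (Lemma~\ref{lem:J}(e)) and Nakayama, forcing generators into $e_jAe_i\subseteq e_jJ$, is a clean and correct way to supply that step.
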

\begin{proof}
(a) follows by multiplying (\ref{eq1}) by $e_j$ on the left.
To show (b), observe that part (b) of
Proposition~\ref{new prop 1.3} implies that the add $J$ approximation of 
  $P_A(i) $ is the identity map, and hence $F(P_A(i))=0.$
\end{proof}

 \begin{lemma}
 \label{lem largest quotient} Let $X\in J^\perp$, and let $M$ be a $B$-module. Then every morphism $g\in \Hom_A(X,M)$ factors through $F(X)$. In particular, $F(X)$ is the largest quotient of $X$ in $\textup{mod}\,B$. 
\end{lemma}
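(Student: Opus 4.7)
The plan is to use the defining short exact sequence of $F(X)$ together with the vanishing property in Lemma~\ref{lem:J}(f). Recall from Remark~\ref{rem ses} that, for $X\in J^\perp$, we have the short exact sequence
\[ 0\to \im f_X \to X \xrightarrow{\pi_X} F(X) \to 0, \]
where $f_X \colon J_X \to X$ is an $\add\,J$ approximation. To factor a morphism $g\colon X \to M$ with $M\in\textup{mod}\,B$ through $\pi_X$, it suffices, by the universal property of the cokernel, to show that $g\circ f_X = 0$ as a morphism $J_X\to M$, because then $g$ vanishes on $\im f_X$.

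Now $J_X\in\add\,J$, so $\Hom_A(J_X, M)$ is a direct summand of a finite direct sum of copies of $\Hom_A(J, M)$. By Lemma~\ref{lem:J}(f), $\Hom_A(J, M) = 0$ since $M$ is a $B$-module, hence $\Hom_A(J_X,M)=0$. In particular $g\circ f_X=0$, so there is a unique $\bar g\colon F(X)\to M$ with $g = \bar g \circ \pi_X$. This proves the factorization statement.

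For the second assertion, suppose $q\colon X\twoheadrightarrow N$ is any surjection with $N\in\textup{mod}\,B$. The first part produces a factorization $q = \bar q\circ \pi_X$ with $\bar q\colon F(X)\to N$. Since $q$ is surjective and $q$ factors through $\bar q$, the map $\bar q$ must itself be surjective, exhibiting $N$ as a quotient of $F(X)$. Thus $F(X)$ is the largest quotient of $X$ lying in $\textup{mod}\,B$.

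This argument is essentially immediate once the two ingredients are in place, so I do not foresee a genuine obstacle; the only point that needs a word of justification is why $\Hom_A(J_X,M)=0$, and this is nothing more than the additivity of $\Hom$ combined with Lemma~\ref{lem:J}(f).
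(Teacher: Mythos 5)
Your proof is correct and follows essentially the same route as the paper: show $g\circ f_X=0$ and invoke the universal property of the cokernel from Remark~\ref{rem ses}. The only cosmetic difference is that you cite Lemma~\ref{lem:J}(f) to kill $\Hom_A(J_X,M)$, whereas the paper argues directly from Lemma~\ref{lem:J}(e) (top of $J_X$ lies in $\add S(i)$, and $M$ is not supported at $i$) — which is exactly how (f) is proved, so the content is the same.
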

\begin{proof}
 The composition $g\circ f_X\colon J_X\to M$ is zero, because $\textup{top}\, J_X \in \add \, S(i)$ and the $B$-module $M$ is not supported at $i$. Therefore the statement follows from the universal property of the cokernel of $f_X$ and the short exact sequence in Remark~\ref{rem ses}.
\end{proof}

\begin{lemma}\label{lem:ses}
Given $Y\in \cmp\,B$ there exists a short exact sequence 
\[\xymatrix{0 \ar[r]& J_X\ar[r]^{f_X} & X \ar[r]& Y\ar[r] & 0}\] 
in $\textup{mod}\,A,$
such that $X\in J^\perp$, $f_X$ is a $J$-approximation and $F(X)=Y$.
\end{lemma}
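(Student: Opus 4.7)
My plan is to realize $X$ as a universal extension of $Y$ by copies of $J$ that kills $\Ext^1_A(Y,J)$. Concretely, I would fix a $k$-basis $\xi_1,\dots,\xi_n$ of the finite-dimensional space $\Ext^1_A(Y,J)$, set $J_X = J^n$, and take $X$ to be the middle term of the short exact sequence with class $(\xi_1,\dots,\xi_n)\in \Ext^1_A(Y,J)^n = \Ext^1_A(Y,J^n)$:
\[0\to J_X \xrightarrow{f_X} X\to Y\to 0.\]
Everything else then reduces to verifying $X\in J^\perp$ and checking that $f_X$ is a $J$-approximation with $F(X)=Y$.

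The approximation part is painless: any $\psi\colon J'\to X$ with $J'\in\add J$ has composition into $Y$ landing in $\Hom_A(J',Y)=0$ by Lemma \ref{lem:J}(f), so $\psi$ factors through $f_X$. Hence $\im f_X = J_X$ and $F(X) = X/J_X = Y$.

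The main obstacle is the $J^\perp$ condition, requiring three Ext-vanishings. The key preliminary input is that $\Ext^1_A(J,N)=0$ for every $N\in\textup{mod}\,B$ that embeds in a free $B$-module. To establish it, I would apply $\Hom_A(J,-)$ to $0\to J\to A\to B\to 0$ and use Lemma \ref{lem:J}(a),(f),(g) together with $\Ext^2_A(J,J)\cong D\Ext^1_A(J,J)=0$ from the 3-Calabi-Yau property of $\scmp A$ to obtain $\Ext^1_A(J,B)=0$; a long exact sequence from any embedding $N\hookrightarrow B^m$ then yields the claim. Since $Y\in\cmp B$ embeds in a free $B$-module, this gives $\Ext^1_A(J,Y)=0$, and applying $\Hom_A(J,-)$ to the defining sequence produces $\Ext^1_A(J,X)=0$.

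For the other two vanishings I would apply $\Hom_A(-,J)$ and $\Hom_A(-,A)$ to the defining sequence; in each case $\Ext^1_A(J_X,-)$ vanishes since $J_X\in \add J\subseteq \cmp A$, reducing the question to surjectivity of the connecting map. With $[X] = (\xi_1,\dots,\xi_n)$, the map $\Hom_A(J^n,J)\to \Ext^1_A(Y,J)$ evaluates as $(\phi_1,\dots,\phi_n)\mapsto \sum_k (\phi_k)_*\xi_k$, which is surjective because the $\xi_k$ are a $k$-basis and $k\subseteq \End_A(J)$; this gives $\Ext^1_A(X,J)=0$. For $\Ext^1_A(X,A)=0$, Lemma \ref{lem:J}(f) identifies $\Hom_A(J_X,A)$ with $\Hom_A(J_X,J)$, and $\Ext^1_A(Y,J)\twoheadrightarrow \Ext^1_A(Y,A)$ holds because $\Ext^1_A(Y,B)=0$; this latter vanishing is obtained from a $B$-projective cover of $Y$ (whose syzygy is again in $\cmp B$), reducing to $\Ext^1_A(e_jB,B)=0$, which is computed from $0\to e_jJ\to e_jA\to e_jB\to 0$ via Lemma \ref{lem:J}(c),(e).
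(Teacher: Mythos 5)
Your universal-extension construction is correct and genuinely different from the paper's proof. The paper embeds $Y$ in a projective $B$-module $P_B$, pulls $0\to J_{P_A}\to P_A\to P_B\to 0$ back along $Y\hookrightarrow P_B$ to obtain a submodule $X_1\subset P_A$, from which $X_1\in\cmp A$ and $\Ext^1_A(J,X_1)=0$ (using $0\to X_1\to P_A\to P_B/Y\to 0$ and Lemma~\ref{lem:J}(f)) are immediate; it then kills $\Ext^1_A(X_j,J)$ by iterating nonsplit extensions $0\to J\to X_{j+1}\to X_j\to 0$, terminating because $\dim\Ext^1_A(X_j,J)$ strictly decreases. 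You instead build $X$ in one step as the extension of $Y$ by $J^{\oplus n}$ whose class is a full $\kb$-basis of $\Ext^1_A(Y,J)$, and trade the iteration for three explicit Ext vanishings, all of which are verified correctly: $\Ext^1_A(X,J)=0$ is the surjectivity of $\Hom_A(J^n,J)\to\Ext^1_A(Y,J)$, which is precisely the universality; $\Ext^1_A(X,A)=0$ follows by composing that surjection with $\Hom_A(J^n,J)\cong\Hom_A(J^n,A)$ and with $\Ext^1_A(Y,J)\twoheadrightarrow\Ext^1_A(Y,A)$; and $\Ext^1_A(J,X)=0$ reduces to $\Ext^1_A(J,B)=0$. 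Two remarks. Your argument for $\Ext^1_A(J,B)=0$ passes through $\Ext^2_A(J,J)\cong D\Ext^1_A(J,J)=0$, which uses the 3-Calabi-Yau property of $\scmp A$; the paper's proof of this particular lemma does not invoke Calabi-Yau duality (it is used elsewhere in the section, so nothing is lost for Theorem~\ref{thm:eq}). Also, to deduce $\Ext^1_A(Y,B)=0$ from $\Ext^1_A(e_jB,B)=0$ you need, in addition, that $\Hom_A(P_B,B)\to\Hom_A(K,B)$ is surjective for the $B$-projective cover $0\to K\to P_B\to Y\to 0$; this holds because $\Hom_A$ and $\Hom_B$ agree on $B$-modules and $\Ext^1_B(Y,B)=0$ for $Y\in\cmp B$ (the remark about the $B$-syzygy $K$ lying in $\cmp B$ is not the relevant input there).
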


\begin{proof}
Let $Y\in \cmp\,B$, then there is an inclusion $\iota:Y\to P_B$ where $P_B$ is a projective $B$-module. 
Let $P_A$ be the corresponding projective $A$-module. Then sequence  \eqref{eq1} implies that $f_{P_A}$ is injective, and there exists a short exact sequence in $\text{mod}\,A$ appearing at the bottom of the following diagram,  

\begin{equation}\label{eq:2}
\xymatrix{0\ar[r] & J_{P_A} \ar[r]^{f_1}\ar@{=}[d] & X_1 \ar[r]\ar[d]^{\iota_1} & Y \ar[r] \ar[d]^{\iota}& 0\\
0\ar[r] & J_{P_A} \ar[r]^{f_{P_A}} & P_A \ar[r] & P_B \ar[r] & 0}
\end{equation}
where $J_{P_A}\in \add\,J$.  Note that $P_B=F(P_A)$.  The top row of the diagram is the short exact sequence obtained as the pullback along $\iota$.  Since $\iota$ is injective, it follows that $\iota_1$ is also injective.  Hence $X_1$ is a submodule of a projective $A$-module, so $X_1\in \cmp\,A$.  

We claim that $F(X_1)=Y$, hence we need to show that $f_1$ is an $\add\, J$ approximation of $X_1$.  Let $f: J'\to X_1$ where $J'\in \add\,J$. Since $f_{P_A}$ is an $\add\,J$ approximation of $P_A$ then $\iota_1f$ factors through $f_{P_A}$.  Thus, there exists some $u: J'\to J_{P_A}$ such that $f_{P_A} u = \iota_1f$.  By commutativity of the above diagram we have $\iota_1 f_1=f_{P_A}$, so we obtain $\iota_1f_1u=f_{P_A}u = \iota_1 f$.  Since $\iota_1$ is injective, we conclude that $f_1u=f$, so $f$ factors through $f_1$.  This shows the claim that $f_1$ is an $\add\,J$ approximation of $X_1$ and $F(X_1)=Y$.

Now, we want to show that $\Ext^1_A(J, X_1)=0$.  Applying the Snake Lemma to Diagram \eqref{eq:2}, we see that $\cok\,\iota\cong \cok\iota_1 \cong P_B/Y$.  Hence, there exists a short exact sequence as follows. 
\[0\to X_1 \to P_A \to P_B/Y\to 0\]
Applying $\Hom_A(J, - )$ to this sequence we obtain a long exact sequence
\[ \Hom_A(J, P_B/Y)\to \Ext^1_A(J, X_1)\to \Ext^1_A(J, P_A).\]
The first term is zero by Lemma~\ref{lem:J}(f) because $P_B/Y \in \text{mod}\,B$.  The last term is zero because $J\in \cmp\,A$, so we obtain the desired result that  $\Ext^1_A(J, X_1)=0$.  

If $\Ext^1_A(X_1, J)=0$, then by above $X_1\in J^\perp$.  Moreover, the sequence $0\to J_{P_A}\to X_1\to Y\to 0$ satisfies the statement of the lemma and we are done.  

Otherwise, if $\Ext^1_A(X_1, J)\not=0$ then we proceed as follows.  Let 
\begin{equation}\label{eq:3}
0\to J \to X_2\to X_1\to 0
\end{equation}
be a nonsplit short exact sequence in $\text{mod}\,A$.  Note that $X_2\in \cmp\,A$ since both $X_1, J\in \cmp\,A$ and the category $\cmp\,A$ is closed under extensions.  Consider the following diagram where the top row is a pullback of the above sequence along $f_{1}: J_{P_A}\to X_1$, the $\add\,J$ approximation of $X_1$.

\begin{equation}
\xymatrix{0\ar[r] & J \ar[r] \ar@{=}[d] & E \ar[r]\ar[d]^h & J_{P_A} \ar[r] \ar[d]^{f_{1}}& 0\\
0\ar[r] & J \ar[r] & X_2 \ar[r] & X_1 \ar[r] & 0}
\end{equation}
 
By Lemma~\ref{lem:J}(g) the module $J$ has no nontrivial selfextensions, thus $E \cong J\oplus J_{P_A}$.  By the Snake Lemma $\cok \,h \cong \cok f_{1} = F(X_1)$. 
 In particular, $\cok \,h$ is a $B$-module, and hence Lemma~\ref{lem largest quotient} implies that $\im \,f_{X_2} \subset \im \,h$.  Moreover, since $E\in \add\,J$ and $f_{X_2}$ is an $\add\,J$ approximation, then $\im \,f_{X_2} = \im \,h$.
   This implies that $F(X_2)\cong \cok\, h \cong \cok\, f_1\cong F(X_1)=Y$.   

Now, apply $\Hom_A(-,J)$ to \eqref{eq:3} to obtain 
\[\Hom_A(J,J)\to \Ext^1_A(X_1, J) \to \Ext^1_A(X_2, J)\to \Ext^1_A(J, J).\]
The last term is zero by Lemma~\ref{lem:J}(g), so the second map above is surjective. On the other hand, 
since the exact sequence (\ref{eq:3}) is non-split, the element $1_J \in \Hom_A(J,J)$ is send to a non-zero element under the first map in the above sequence. We conclude that $ \dim \Ext^1_A(X_1, J) >
\dim \Ext^1_A(X_2, J)$.  

Now, one can continue as before, replacing $X_1$ with $X_2$.  Eventually the process must terminate, as the dimension of the extension space between $X_j$ and $J$ is less then that between $X_{j-1}$ and $J$.  Hence there exists some $X_r=X\in J^\perp$ satisfying the statement of the lemma. 
\end{proof}

\begin{prop}\label{prop:fd}
The functor $F: J^\perp \to \cmp\,B$ is full and dense. 
\end{prop}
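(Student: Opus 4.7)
Density is essentially immediate from Lemma~\ref{lem:ses}: given $Y \in \cmp\,B$, that lemma produces $X \in J^\perp$ with $F(X) \cong Y$, so every object of $\cmp\,B$ lies in the essential image of $F$. The substantive content is fullness.

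For fullness, fix $X, X' \in J^\perp$ and a morphism $g\colon F(X) \to F(X')$. Writing $\pi\colon X \twoheadrightarrow F(X)$ and $\pi'\colon X' \twoheadrightarrow F(X')$ for the canonical surjections from Remark~\ref{rem ses}, the goal is to construct $\tilde g\colon X \to X'$ satisfying $\pi'\tilde g = g\pi$; the definition of $F$ on morphisms then forces $F(\tilde g) = g$ via the universal property of cokernels. Applying $\Hom_A(X,-)$ to the short exact sequence $0 \to \textup{im}\,f_{X'} \to X' \xrightarrow{\pi'} F(X') \to 0$, such a lift exists precisely when the connecting homomorphism $\partial\colon \Hom_A(X, F(X'))\to \Ext^1_A(X, \textup{im}\,f_{X'})$ kills the element $g\pi$. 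So the task reduces to proving $\partial(g\pi)=0$.

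To analyze this obstruction, set $K := \textup{im}\,f_{X'}$. By Lemma~\ref{lem:J}(e), $\textup{top}\,K \in \add\,S(i)$, so the projective cover $P \twoheadrightarrow K$ satisfies $P \in \add\,P_A(i) \subseteq \add\,J$, and its kernel $\Omega K$ is a submodule of $P$, hence in $\cmp\,A$. Applying $\Hom_A(X,-)$ to $0 \to \Omega K \to P \to K \to 0$, the hypothesis $X \in J^\perp$ gives $\Ext^1_A(X,P)=0$, and the 3-Calabi-Yau duality $\Ext^2_A(X,P) \cong D\,\Ext^1_A(P,X)$ in $\scmp\,A$ kills $\Ext^2_A(X,P)$ as well. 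The long exact sequence therefore collapses to $\Ext^1_A(X,K) \cong \Ext^2_A(X,\Omega K) \cong D\,\Ext^1_A(\Omega K,X)$, the last isomorphism again by 3-CY, so the obstruction becomes dual to a class in $\Ext^1_A(\Omega K,X)$.

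The main obstacle is showing this class vanishes. My plan is a d\'evissage using Lemma~\ref{lem:ses}: apply it to $Y = F(X')$ to produce a replacement $X'' \in J^\perp$ sitting in a short exact sequence $0 \to J_{X''} \to X'' \to F(X') \to 0$ with $J_{X''} \in \add\,J$ and with $F(X'') = F(X')$. Since $\Ext^1_A(X, J_{X''})=0$, the composite $g\pi\colon X \to F(X')$ lifts unobstructedly to a morphism $\phi\colon X \to X''$. One then constructs a comparison morphism $\psi\colon X'' \to X'$ in $J^\perp$ lying over the identity on $F(X'')=F(X')$, using the special form of the $X''$-sequence (its middle term $X''$ maps through its $\add\,J$-quotient into $X'$ in a way compatible with the two projections), together with $X'\in J^\perp$ and $\Hom_A(J,B)=0$ from Lemma~\ref{lem:J}(f). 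The composite $\tilde g := \psi\circ\phi\colon X \to X'$ is the desired lift, and $F(\tilde g)=g$ follows from $\pi'\tilde g = \pi'\psi\phi = \pi_{X''}\phi = g\pi$ by applying the universal property of $F(X)=X/\textup{im}\,f_X$.
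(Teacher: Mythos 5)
The density argument is identical to the paper's (a direct appeal to Lemma~\ref{lem:ses}), so that part is fine. Your fullness argument, however, diverges from the paper's and has a gap at the crucial step.

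The paper's proof is short: applying $\Hom_A(M,-)$ to the sequence $0\to J_N\to N\to F(N)\to 0$ with $J_N\in\add J$ gives the obstruction term $\Ext^1_A(M,J_N)=0$ directly from $M\in J^\perp$, and the lift exists. You instead insist on the honest kernel $\im f_{X'}$, which a priori need not lie in $\add J$, and then try to work around this. Two issues arise. First, the 3-Calabi--Yau detour (rewriting $\Ext^1_A(X,K)\cong D\,\Ext^1_A(\Omega K,X)$) is a dead end: you obtain a dual description of the obstruction and then abandon it, so it does no work in the proof. Second, and more seriously, the d\'evissage step is not closed. You lift $g\pi$ to $\phi\colon X\to X''$ using $\Ext^1_A(X,J_{X''})=0$; this part is fine. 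But the comparison map $\psi\colon X''\to X'$ over the identity on $F(X')$, applied via $\Hom_A(X'',-)$ to $0\to\im f_{X'}\to X'\to F(X')\to 0$, faces exactly the same obstruction you started with, namely a class in $\Ext^1_A(X'',\im f_{X'})$, which you have not shown to vanish. (The unobstructed direction is $\psi'\colon X'\to X''$, since the obstruction there is in $\Ext^1_A(X',J_{X''})=0$ --- but that map goes the wrong way, and you have not shown it is invertible.) The sentence ``using the special form of the $X''$-sequence \dots together with $X'\in J^\perp$ and $\Hom_A(J,B)=0$'' is a placeholder, not an argument.

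The cleanest repair is simply the paper's route: use Lemma~\ref{lem:ses} (or argue directly) to place yourself in the situation where the $\add J$-approximation of the target is injective, so that $\ker\pi_N=J_N\in\add J$ and $\Ext^1_A(M,J_N)=0$ kills the obstruction in one line. Your instinct to worry about whether $f_{X'}$ is injective is reasonable --- the paper slides over this point --- but the fix you propose does not succeed in eliminating the dependence on that fact; it merely relocates it.
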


\begin{proof}
It follows directly from Lemma~\ref{lem:ses} that $F$ is dense.  To show that $F$ is full, consider a map $\bar f: F(M)\to F(N)$ where $M,N\in J^\perp$.  From the definition of $F$ and using
Lemma~\ref{lem:ses}, we see that
there exist two short exact sequences
\[\xymatrix@C20pt{0\ar[r]& J_M\ar[r]^{f_M} &M\ar[r]^-{\pi_M}&F(M)\ar[r]&0}\qquad \textup{and}\qquad
\xymatrix@C20pt{0\ar[r]& J_N\ar[r]^{f_N} &N\ar[r]^-{\pi_N}&F(N)\ar[r]&0} 
\]
where $J_M, J_N\in \add\,J$.  Applying $\Hom_A(M, -)$ to the second sequence we obtain an exact sequence as follows. 
\[\Hom_A(M,N)\xrightarrow{{\pi_N}_*} \Hom_A(M, F(N))\to \Ext^1_A(M, J_N)\]
Note that the last term is zero because $M\in J^\perp$.
Therefore the composition $\bar f\circ \pi_M: M\to F(N)$  lifts to a map $f: M\to N$ such that the following diagram commutes. 
\[
\xymatrix{M\ar[r]^{\pi_M}\ar[d]^{f}& F(M)\ar[d]^{\bar f}\\
N\ar[r]^{\pi_N}& F(N)}
\]
By definition of the functor $F$, this implies that $F(f)=\bar f$.  This shows that $F$ is full. 
\end{proof}

Recall that $(\Fac J)$ denotes the ideal in $\cmp\, A$ of all morphisms factoring through $\Fac J \cap \cmp\, A$. 

\begin{thm}\label{thm:eq}
The functor $F: {J^\perp/(\Fac J)} \to \cmp\,B$ is an equivalence of categories. Moreover $F$ induces an equivalence $\bar F:\underline{J^\perp/(\Fac J)} \to \scmp\,B$. 
\end{thm}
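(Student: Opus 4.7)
The plan is to exhibit $F$ as a fully faithful and dense functor on the quotient category $J^\perp/(J)$, and then to pass to the stable categories. The fullness and density are already in hand from Proposition~\ref{prop:fd}, so the new work is to verify that $F$ factors through $(J)$ and that the induced functor is faithful.

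First I would observe that $F$ vanishes on $\add J$: for any $J'\in \add J$, the identity $\id_{J'}$ is itself an $\add J$-approximation, so $F(J')=J'/J'=0$. Therefore every morphism factoring through $\add J$ is sent to zero, and $F$ descends to a well-defined functor $\bar F\colon J^\perp/(J)\to \cmp B$, which is still full and dense by Proposition~\ref{prop:fd}.

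The main content is faithfulness. Suppose $g\colon M\to N$ in $J^\perp$ satisfies $F(g)=0$. By Remark~\ref{rem ses} this means $\pi_N\circ g=0$ where $\pi_N\colon N\twoheadrightarrow F(N)$ has kernel $\im f_N$, so $g$ factors as $g=\iota_N\circ g'$ with $g'\colon M\to \im f_N$ and $\iota_N$ the inclusion. To conclude $g\in (J)$, I plan to lift $g'$ through the surjection $\bar f_N\colon J_N\twoheadrightarrow \im f_N$; a lift $\tilde g\colon M\to J_N$ would give $g=\iota_N\bar f_N\tilde g=f_N\tilde g$, factoring $g$ through $J_N\in\add J$. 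The obstruction is $\Ext^1_A(M,\ker f_N)$, so the heart of the proof is the vanishing
\[
\Ext^1_A(M,\ker f_N)=0\qquad\text{for all }M,N\in J^\perp.
\]
I would establish this using the 3-Calabi-Yau structure of $\scmp A$. By 3-CY duality the vanishing is equivalent to $\Ext^2_A(\ker f_N,M)=0$. Applying $\Hom_A(-,M)$ to $0\to \ker f_N\to J_N\to \im f_N\to 0$ gives a long exact sequence in which $\Ext^1_A(J_N,M)=0$ (since $M\in J^\perp$ and $J_N\in\add J$) and $\Ext^2_A(J_N,M)\cong D\Ext^1_A(M,J_N)=0$ (by 3-CY). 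This reduces $\Ext^2_A(\ker f_N,M)$ to a subgroup of $\Ext^3_A(\im f_N,M)\cong D\Hom_{\scmp A}(M,\im f_N)$, which can then be controlled using the short exact sequence $0\to \im f_N\to N\to F(N)\to 0$ together with $F(N)\in\textup{mod}\,B$ and Lemma~\ref{lem:J}(f), which forbids nontrivial morphisms from $\add J$-objects into $B$-modules.

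Finally, for the stable equivalence $\bar F\colon \underline{J^\perp/(J)}\to \scmp B$, I would invoke Lemma~\ref{lem:Fproj}: $F$ sends each projective $A$-module (all of which lie in $J^\perp$ by Proposition~\ref{new prop 1.3}) to a projective $B$-module when $j\ne i$, or to zero when $j=i$. Since the projective-injective objects of the Frobenius category $\cmp A$ are exactly the projective $A$-modules, $\bar F$ respects the Frobenius structure and therefore induces the desired equivalence on the stable quotients. The main obstacle throughout is the $\Ext^1$-vanishing above: a naive attempt is circular, as this vanishing is essentially equivalent to the lifting statement we seek, and it is the 3-Calabi-Yau hypothesis that allows one to break the circularity via Serre duality.
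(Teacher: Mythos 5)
Your overall plan matches the paper's: show $F$ vanishes on $\add J$ and descends to a full and dense functor on $J^\perp/(J)$, then establish faithfulness and pass to the stable quotient. The first and last steps are in order, though for the stable step the paper is more concrete than ``$\bar F$ respects the Frobenius structure'': if $F(f)$ is zero in $\scmp B$ but not in $\cmp B$, it factors through a projective $B$-module, and since $F$ is full and sends projectives to projectives (Lemma~\ref{lem:Fproj}), the map $f$ factors through a projective $A$-module.

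The gap is in faithfulness. You rightly observe that if $F(g)=0$ then $g$ factors through $\im\,f_N$, and that to conclude $g\in(J)$ one must lift $g'\colon M\to\im\,f_N$ through $J_N\twoheadrightarrow\im\,f_N$, an obstruction living in $\Ext^1_A(M,\ker f_N)$. The paper avoids this obstruction altogether: invoking Lemma~\ref{lem:ses}, it takes the rows of the comparison diagram to be short exact, i.e.\ the $\add J$-approximation $f_N$ of $N\in J^\perp$ is \emph{injective}, so $\ker f_N=0$ and the lift $u\colon M\to J_N$ with $f_Nu=g$ is immediate. By not using this injectivity, you commit yourself to proving $\Ext^1_A(M,\ker f_N)=0$, and your 3-Calabi--Yau sketch does not close. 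After the dimension shift you obtain an embedding $\Ext^2_A(\ker f_N,M)\hookrightarrow\Ext^3_A(\im\,f_N,M)\cong D\Hom_{\scmp A}(M,\im\,f_N)$ and claim the target ``can be controlled'' via $0\to\im\,f_N\to N\to F(N)\to 0$ and Lemma~\ref{lem:J}(f). But $\Hom_{\scmp A}(M,\im\,f_N)$ is not zero in general --- for instance when $\ker f_N=0$ one has $\im\,f_N\cong J_N\in\add J$, and there is nothing forcing $\Hom_{\scmp A}(M,J_N)$ to vanish --- so bounding the obstruction by this group cannot by itself give vanishing; moreover Lemma~\ref{lem:J}(f) concerns maps \emph{out of} $J$ into $B$-modules, not the $\Hom$ group you need. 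The remedy is the paper's: establish, or cite from Lemma~\ref{lem:ses}, the injectivity of the $\add J$-approximation, which dissolves the obstruction rather than trying to compute it away.
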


\begin{proof} 
First, we show that $F(\Fac\, J)=0$.  Given $M\in \Fac \, J \cap \cmp\, A$,  there exists a surjection $J^d\to M$ for some $d\geq 0$.  In particular, the $\add\, J$-approximation of $M$ is surjective, which implies that $F(M)=0$.  
This shows that $F(\Fac\, J)=0$, and since $F$ maps projectives to projectives, 
 Proposition~\ref{prop:fd} implies that $\bar F$ is full and dense.
 
 It remains to show that $\bar F$ is faithful.  Suppose that $\bar F(f)=0$ in $\scmp\,B$ for some morphism $f: M\to N$ in $J^\perp$. Then for any morphism $f$, by definition of $F$ we have the following diagram 
\[\xymatrix{J_M \ar[r]^{f_M} \ar[d]& M \ar[r]^{\pi_M} \ar[d]^f& F(M) \ar[r] \ar[d]^{F(f)}& 0 \\
J_N \ar[r]^{f_N} & N \ar[r]^{\pi_N} & F(N) \ar[r] & 0}
\]
with exact rows.  

If $F(f)=0$ in $\cmp\,B$, then in particular $F(f)=0$ in $\text{mod}\,B$.  By commutativity of the above diagram $\pi_Nf =0$, so $f$ factors through the kernel of $\pi_N$.   Thus, there exists a map $u: M\to \ker\, \pi_N$ such that $f=i u$ where $i:\ker\,\pi_N\to N$ is the inclusion.   Since $\ker\,\pi_N=\im\, f_N$ is a quotient of $J_N$ and a submodule of $N$ then $\ker\,\pi_N\in \Fac J\cap \cmp\, A$.  In particular, $f$ is zero in $J^\perp/(\Fac J)$. This shows that $F$ is full, and from Proposition~\ref{prop:fd}we see that $F$ is an equivalence.  

If $F(f)=0$ in $\scmp\,B$ but it is a nonzero map in $\cmp\,B$, then $F(f)$ factors through a projective $B$-module.  We know from Lemma~\ref{lem:Fproj} that projective $A$-modules are mapped to projective $B$-modules under $F$. Moreover, by Proposition~\ref{prop:fd} the functor $F$ is full onto $\cmp\,B$, so $f$ factors through a projective $A$-module.  In particular, $f$ is zero in $\underline{J^\perp/(\Fac J)}$.
\end{proof}

As an application of Theorems \ref{prop:genrad} and \ref{thm:eq} we obtain several characterizations for when $A$ and $B$ have equivalent stable CM categories. 

\begin{corollary} \label{cor} Let $A=\kb Q/I$ be a 2-Calabi-Yau tilted algebra and $B = A/J$, where $J=Ae_iA$, with $i\in Q_0$. Further let $m_{ji}=\dim \Hom_A(P_A(i), P_A(j))=\dim \,e_j Ae_i$. Then the following are equivalent.
\begin{enumerate}
\item [(a)] $\bar F: \scmp\,A \to \scmp\,B$ is an equivalence.
\item [(b)] $J$ is projective in $\textup{mod}\,A$ and $m_{ii}=1$.
\item [(c)] $J\cong \oplus_{j\in Q_0} P_A(i)^{m_{ji}}$.
\item [(d)] For  all $j\in Q_0$, we have $P_A(i)^{m_{ji}}\subset P_A(j)$.
\item [(e)] The radial $\rad\, P(i)$ is generated by $\oplus_{j\not=i} \rad P(j)$.
\item [(f)] There exists a short exact sequence $0\to \rad\, P(i) \oplus Y\to P \to X\to 0$ where  $Y\in\cmp A$, $P$ is projective, and $X$ is not supported at $i$.
\end{enumerate}
\end{corollary}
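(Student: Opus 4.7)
My overall plan is to establish the six conditions through three natural blocks, and then link them together. The key tools are Lemma~\ref{lem:J} (structural properties of $J$), Lemma~\ref{genrad} (the two characterizations of being generated by radicals), Theorem~\ref{thm:eq} (the equivalence $\scmp B \cong \underline{J^\perp/(J)}$), and Theorem~\ref{prop:genrad} (radicals generate $\scmp A$).

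First I would treat $(b) \Leftrightarrow (c) \Leftrightarrow (d)$, which is purely structural. By parts (b) and (e) of Lemma~\ref{lem:J}, $J$ decomposes as $\bigoplus_w wA$ indexed by basis paths $w$ ending at $i$, each summand being a cyclic quotient of $P_A(i)$ with top $S(i)$. Hence $J$ is projective exactly when every $wA$ is isomorphic to $P_A(i)$, and collecting by starting vertex (with $m_{ji}$ such paths) gives the decomposition in $(c)$. The inclusions $e_j J \subseteq e_j A = P_A(j)$ turn $(c)$ into $(d)$, and dimension counting recovers $(c)$ from $(d)$. Similarly, $(e) \Leftrightarrow (f)$ follows immediately from Lemma~\ref{genrad} applied to $M = \rad P(i)$ with $I = Q_0 \setminus \{i\}$.

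Next comes $(a) \Leftrightarrow (b)$, by way of Theorem~\ref{thm:eq}. For $(b) \Rightarrow (a)$: projective $J$ is projective-injective in the Frobenius category $\cmp A$, so $\Ext^1_A(J,X) = \Ext^1_A(X,J) = 0$ for every $X \in \cmp A$, forcing $J^\perp = \cmp A$; moreover $\add J \subseteq \add A$, so morphisms through $\add J$ already factor through projectives, and Theorem~\ref{thm:eq} delivers $\scmp B \cong \underline{J^\perp/(J)} = \scmp A$. The reverse $(a) \Rightarrow (b)$ is the main obstacle. My plan is to combine $(a)$ with Theorem~\ref{thm:eq} to obtain $\scmp A \cong \underline{J^\perp/(J)}$, observe that the inclusion $J^\perp \hookrightarrow \cmp A$ induces a fully faithful functor $\underline{J^\perp} \hookrightarrow \scmp A$, and then run a Krull--Schmidt count of indecomposable isoclasses in which the quotient by $(J)$ kills precisely the non-projective indecomposable summands of $J$. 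The equivalence chain forces both $\underline{J^\perp} = \scmp A$ and the absence of any non-projective summand of $J$, so $J$ is projective.

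Finally, I would bridge $(e)$--$(f)$ to the rest using Theorem~\ref{prop:genrad}. Applied to $B$, that theorem gives $\scmp B$ generated under extensions by $\{\rad_B P_B(j) : j \neq i\}$. Transporting this generating set through the equivalence $\scmp A \cong \scmp B$ (composed with the functor $F$ of Theorem~\ref{thm:eq}, whose action on projectives is given by Lemma~\ref{lem:Fproj}), one shows that $\scmp A$ itself is generated under extensions by $\{\rad_A P_A(j) : j \neq i\}$, placing $\rad P(i)$ in the ext-closure of the others and so giving $(e)$. Conversely, from $(f)$ the fact that $X$ is a $B$-module forces $PJ \subseteq \rad P(i) \oplus Y$ in the given short exact sequence, and a careful comparison with the decomposition in Lemma~\ref{lem:J}(b) forces each summand $wA$ of $J$ to be isomorphic to $P_A(i)$, i.e.\ $(b)$. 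The two delicate technical steps are the Krull--Schmidt indec-count argument in $(a) \Rightarrow (b)$ and the tracking of generators through $F$ when closing the cycle to $(e)$.
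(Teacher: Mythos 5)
Your blocks $(b)\Leftrightarrow(c)\Leftrightarrow(d)$, $(e)\Leftrightarrow(f)$, and $(b)\Rightarrow(a)$ all match the paper in substance and are fine. The genuine gap is in the step that has to \emph{produce} ``$J$ projective'' from the other conditions. You offer two routes, and neither one works as stated. The Krull--Schmidt count for $(a)\Rightarrow(b)$ is not available in this generality: for a general 2-Calabi-Yau tilted algebra $\scmp A$ can have infinitely many indecomposables, so comparing cardinalities of indecomposable isoclasses is meaningless, and an abstract triangle equivalence $\scmp A\cong\underline{J^\perp/(J)}$ does not force the inclusion $\underline{J^\perp}\hookrightarrow\scmp A$ to be an equality or the Verdier/additive quotient by $(J)$ to be trivial. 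Your $(f)\Rightarrow(b)$ is likewise under-powered: knowing $PJ$ lands in $\rad P(i)\oplus Y$ says nothing, on its own, about whether each cyclic summand $wA$ of $J$ from Lemma~\ref{lem:J}(b) is \emph{isomorphic} to $P_A(i)$, i.e.\ whether the surjection $P_A(i)\to wA$ is injective; ``careful comparison'' does not supply that. The paper instead applies $\Hom_A(J,-)$ to the sequence in $(f)$ to deduce $\Ext^1_A(J,\rad P(i))=0$, notes (via Lemma~\ref{lem:J}(e)) that the projective cover of $J$ lies in $\add P(i)$, and then invokes the external input from \cite[Lemma 5.2]{SS4} — every non-projective syzygy has a nonzero extension with the radical of its projective cover — to force $J$ to be projective. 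That lemma is the load-bearing ingredient, and your proposal has no substitute for it.

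Two smaller remarks. For $(b)\Rightarrow(a)$ you argue through ``projective-injective in the Frobenius category,'' whereas the paper just observes $J=0$ in $\scmp A$; both are fine. In $(a)\Rightarrow(e)$, transporting generators through $F$ requires identifying $F(\rad_A P_A(j))$ with $\rad_B P_B(j)$ for $j\ne i$ (not merely $F(P_A(j))=P_B(j)$ from Lemma~\ref{lem:Fproj}); this deserves a line of justification, though the paper is also brief here.
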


\begin{proof}
By Theorem~\ref{thm:eq} we have $\bar F: \scmp\,A \to \scmp\,B$ is an equivalence if and only if $\underline{J^\perp/(\Fac J)}\cong \scmp\,A$.  This occurs if and only if $J=0$ and $\Fac J\cap \cmp\,A =0$ in $\scmp\,A$, which is equivalent to $J$ being projective in $\text{mod}\,A$ and $\Fac J\cap \cmp\,A$ containing only projective $A$-modules respectively. Thus to show that (a) and (b) are equivalent, it remains to show that $\Fac J\cap \cmp\,A $ consists only of projective $A$-modules if and only if $m_{ii}=\dim\, \Hom_A(P_A(i), P_A(i))=1$, provided $J$ is projective.  

Let $M\in \Fac J\cap \cmp\,A $ and we claim that $M$ can be realized as an image of some map $g$ in $\add\, J$.
Since $M\in \Fac J$ there exists a surjection $\pi: J_M \to M$ where $J_M \in \add\, J$, and since $M\in \cmp\, A$ there exists an inclusion $u: M \to P$  where $P$ a projective $A$-module.  Then we have the following diagram with exact rows.  Here $f_P$ is the $\add \, J$ approximation of $P$, which is injective by definition of $J$, see the short exact sequence \eqref{eq1}.

\[\xymatrix{
&J_M \ar[rr] \ar@{->>}[dr]^{\pi} \ar@{-->}[dd]^{g}&  & P \ar[rr]^{v} \ar@{=}[dd]&& \cok \, u \ar[r] \ar@{-->>}[dd]^{h}& 0 \\
&&M \ar@{^{(}->}[ur]^{u} \ar@{..>}[dl]^q\\
0\ar[r]&J_P \ar@{^{(}->}[rr]^{f_{P}}&& P \ar[rr]^{\pi_{P}} && F(P) \ar[r] & 0 
}
\]

Since $u\pi$ is a map from $\add\, J$ to $P$ then it factors through $f_P$ and there exists a map $g: J_M\to J_P$ that makes the square on the left commute.  Then $\pi_P u\pi = \pi_P f_Pg=0$, and by the universal property of the cokernel of $u\pi$, there exits a map $h: \cok \, u \to F(P)$ that makes the square on the right commute.  Now by the universal property of the kernel of $\pi_P$ there exists a map $q: M\to J_P$ such that $u=f_P q$.  Note that $q$ must be injective.   Since $f_P q \pi = u\pi = f_P g$ and $f_P$ is injective we conclude that $q \pi = g$.  In particular, $M \cong \im\, g$, which shows the desired claim.  

Now, we show that given $J$ is projective, then $\Fac J\cap \cmp\,A $ consists only of projective $A$-modules if and only if $m_{ii}=1$.  First note that since $J$ is projective then Lemma~\ref{lem:J}(e) implies that $J$ is a direct sum of copies of $P_A(i)$. Moreover, the image of any map $g: P_A(i)\to P_A(i)$ is a module $M\in  \Fac J\cap \cmp\,A$.   Therefore, if $m_{ii}>1$ then there exists a map $g: P_A(i)\to P_A(i)$ that is nonzero, not an isomorphism, and whose image $M$ is not a projective module. Conversely, since any $M\in  \Fac J\cap \cmp\,A$ can be realized as an image of some map $g$ in $\add\, J=\add \, P_A(i)$, then $m_{ii}=1$ implies that $M\in \add\, P_A(i)$.  In particular, $\Fac J\cap \cmp\,A $ consists only of projective $A$-modules.
This completes the proof that (a) and (b) are equivalent. 

Next, we show that (b) implies (c).  
Since $J$ is projective then, by Lemma~\ref{lem:J}(e), it is a direct sum of copies of $P_A(i)\cong e_iA$.  By Lemma~\ref{lem:J}(b) we have $J \cong \sum_w wA$ where $w$ runs over all equivalence classes of paths in $A$ ending at vertex $i$.  For a fixed path $w$, the module $wA$ is a submodule of $J$ and also $wA$ is a quotient of $P_A(i)$.  Then $wA$ is an image of some map $J \to P_A(i)$.  Since $m_{ii}=1$, we conclude that $wA\cong P_A(i)$.  Therefore, we have that 
\[J \cong  \sum_w wA \cong \sum_{j\in Q_0} P_A(i)^{m_{ji}}.\]
It remains to show that this sum is a direct sum.  Thus, assume that there exist paths $w, w' \in e_h A e_i$ and paths $a, a' \in e_i A e_j$   for some vertices $h, j \in Q_0$ such that $wa=w'a'$ in the algebra $A$. Without loss of generality we may assume that $w, w'$ only visit vertex $i$ at the endpoints, otherwise we can replace them with starting segments of $w, w'$ that only visit vertex $i$ at the endpoints.  Then $P_A(i)\oplus P_A(i) \xrightarrow{(w, w')} P_A(h)$ is part of the injective map $0\to e_h J \to e_h A$.  But the element $(a, a')^T$ would be in the kernel of this map, which is a contradiction.  This completes the proof that (b) implies (c).

Now suppose (c) holds. Then $J \cong Ae_i A \cong \oplus_{j\in Q_0} P(i)^{m_{ji}}$ as right $A$-modules. By Lemma~\ref{lem:J}(b) we also have $J \cong \sum_w wA$ where $w$ runs over all equivalence classes of paths in $A$ ending at vertex $i$.  
Since the number of direct summands in the first expression for $J$ is equal to the number of paths $w$ in the second expression, then the second sum is also a direct sum.  In particular, $J\cong \oplus _w wA$.
Multiplying the sequence \eqref{eq1} with $e_i$ on the left yields a short exact sequence in $\text{mod}\,A$ 
\[0\to e_i J \to e_i A \to e_i B \to 0\]
where the last term is zero, since $B$ is not supported at $i$.  Then 
\[e_iA\cong e_i J  = e_i \big( \oplus_w wA) = \oplus_w e_i wA\] 
where $w$ runs over all equivalence classes of paths in $A$ ending at $i$. Thus there is only one path $w=e_i$.  
This shows the equivalence of (b) and (c).

 
Condition (c) implies  (d) because $e_jJ=e_jAe_iA\subset e_j A=P_A(j)$ for all $j\in Q_0$.  On the other hand, if $P_A(i)^{m_{ji}}\subset P_A(j)$ then whenever $u$ is a path from $j$ to $i$ and $v$ is a path starting at $i$ then the composition $uv$ is nonzero in $A$. Thus $e_jJ=e_jAe_iA\cong P_A(i)^{m_{ji}}$, and hence (d) implies (c).

The conditions (e) and (f) are equivalent by Lemma~\ref{genrad}. Now, we show that (e) implies (b).  Suppose that the radical $\rad\, P(i)$ is generated by the other radicals $\oplus_{j\not=i} \rad P(j)$.  Then by Lemma~\ref{genrad} there exists a short exact sequence 
\begin{equation}\label{eq:21}
0\to \rad\,P(i) \oplus Y\xrightarrow{u} P \to X\to 0
\end{equation}
where  $Y\in\cmp A$, $P$ is projective, and $X$ is not supported at vertex $i$.  Applying $\Hom_A(J, -)$ to this sequence, where $J= Ae_iA$, we obtain the following long exact sequence.
\[\dots \to \Hom_A(J, X)\to \Ext^1_A(J, \rad\,P(i) \oplus Y)\to \Ext^1_A(J, P)\to \dots\] 
Note that since $X$ is not supported at $i$ then Lemma~\ref{lem:J}(e) implies that $\Hom_A(J, X)=0$. Moreover, by Lemma~\ref{lem:J}(a) we have $J\in \cmp A$, and thus $\Ext^1_A(J, P)=0$.  Thus, we conclude that $\Ext^1_A(J, \rad\,P(i))=0$.  Since $J$ is a syzygy whose top consists of simple modules at vertex $i$, it follows that the projective cover $P(J)$ of $J$ consists of projective modules $P(i)$.  However,  \cite[Lemma 5.2]{SS4} implies that every non-projective syzygy $J$ has an extension with the radical of its projective cover. Thus $\Ext^1_A(J, \rad\,P(i))$ being zero implies $J$ is projective and $J^\perp = \cmp\, A$.  It remains to show that $m_{ii}=1$.   Consider the following commutative diagram with exact rows, where the map $u$ is the inclusion appearing in sequence \eqref{eq:21}.  The square on the right is obtained by applying the functor $F$ to $u$.  The square on the left is obtained by taking kernels of $\pi$ and $\pi_P$ respectively, and note that since $P$ is a projective $A$-module, then $F(P)$ is a projective $B$-module by Lemma~\ref{lem:Fproj}, so the kernel of the projection $\pi_P$, denoted by $J_P$ is a summand of $J$. 
\[\xymatrix{
0\ar[r] & \ker\,\pi \ar[r] \ar@{^{(}..>}[d]^v& \rad\, P(i) \oplus Y \ar[r]^{\pi} \ar@{^{(}->}[d]^u& F(\rad\, P(i) \oplus Y) \ar[r] \ar[d]^{F(u)}& 0 \\
0\ar[r] & J_P\ar[r] & P \ar[r]^{\pi_P} & F(P) \ar[r] & 0 
}
\]
Now applying the Snake Lemma we obtain a long exact sequence, where we note that $\cok \, u \cong X$.  
\[0\to \ker\, F(u) \to \cok\, v \to X \to \cok\, F(u)\to 0 \]
Recall that $X$ and $\ker\, F(u)$ are not supported at vertex $i$, while $\cok\, v$ is a quotient of $J_P$, so its top is a direct sum of copies of $S(i)$.  This implies that $\cok\, v = 0$, and therefore $\ker\,\pi \cong J_P$.   In particular, the $\add\, J$-approximation of $\rad\, P(i)$ is a summand of $J_P$ and is an inclusion.  Since $J$ is projective, we obtain an inclusion $P_A(i)^a \to \rad\, P(i)$ for some $a\geq 0$.  Moreover, $\rad\, P(i)$ is a proper submodule of $P_A(i)$, so we obtain a proper inclusion $P_A(i)^a \to P_A(i)$.  This implies that $a=0$.  Since,  $P_A(i)^a = 0$ is an $\add\, J$-approximation of $\rad\,P(i)$ it follows that $\rad\,P(i)$ is not supported at $i$.  Therefore, we conclude that $m_{ii}=1$.    This completes the proof that (e) implies (b).  

Lastly, it remains to show that (a) implies (e). Suppose that $\bar F: \scmp\,A \to \scmp\,B$ is an equivalence. Since (a) implies (b) then $J$ is a direct sum of copies of $P_A(i)$ and $\Fac J \cap \cmp\, A= \add\, J$.  In particular, Theorem~\ref{thm:eq} implies that $F: \cmp\, A/ \add\, P_A(i) \to \cmp\, B$ is an equivalence.  Furthermore, by Theorem~\ref{prop:genrad}, the  radicals of $B$ generate $\scmp\,B$, which under the above equivalence means that the radicals $\rad P_A(i)$, with $i\ne j$, generate $\scmp\,A$, provided that a short exact sequence in  $\cmp\, B$ lifts to a short exact sequence in $\cmp\, A$ via $F$.  In particular, this would imply that $\oplus_{j\not=i} \rad P(j)$ generate $\rad\,P(i)$, which shows (e).  Therefore, it remains to show the claim that for a short exact sequence 
\begin{equation}\label{eq:22}
0\to X' \xrightarrow{u'} Y' \xrightarrow{v'} Z'\to 0
\end{equation}
in $\cmp\, B$, there exists a short exact sequence in $\cmp\, A$ that maps to \eqref{eq:22} under $F$, assuming condition (b) holds.  Since $F$ is an equivalence, there exist $v:Y\to Z \in \cmp\, A$ such that $F(v)=v'$.   Then we obtain the following commutative diagram with exact rows.  Note that $f_Y, f_Z$ are injective because $\ker\pi_Y, \ker\,\pi_Z \in \Fac J \cap \cmp\, A= \add\, J$.
\[\xymatrix{
0\ar[r] & J_Y \ar[r]^{f_Y} \ar[d]^h& Y\ar[r]^{\pi_Y} \ar[d]^v& Y' \ar[r] \ar[d]^{v'}& 0 \\
0\ar[r] & J_Z \ar[r]^{f_Z} & Z\ar[r]^{\pi_Z} & Z' \ar[r] & 0 \\
}\]
Since, $J$ is a direct sum of copies of $P_A(i)$ and $\End_A(P_A(i))=\kb$, it follows that $\cok \, h$ is a summand of $J_Z$ and $\ker\, h$ is a summand of $J_Y$.  To emphasize this, denote $J'_Z = \cok \, h$ and  $J'_Y=\ker\, h$.   Since $v'$ is surjective by assumption, the Snake Lemma implies that $p': J_Z' \to \cok \, v$ is surjective.  
Let $p: Z \to \cok \, v$ denote the surjection, and since $J_Z'$ is a summand of $J_Z$ let $i: J_Z'\to J$ denote the corresponding inclusion. Note that $p'=pf_Z i$.  Now, define a map $v'': Y\oplus J_Z' \to Z$ by setting $v''= \begin{bsmallmatrix}v && f_Z i \end{bsmallmatrix}$.   
We claim that $v''$ is surjective.  Given $z\in Z$, consider $p(z)\in \cok \, v$.  Since $p'$ is surjective, there exists $z'\in J_Z'$ such that $p'(z')=p(z)$.  Then $z-f_Z i(z')\in \ker\, p = \im\, v$, and there exists $y\in Y$ such that $v(y)=z-f_Z i(z')$.  Then $z=v(y)+f_Zi(z')$, which shows that $v''$ is surjective.

Let $X$ denote the kernel of $v''$.  Now, consider the following commutative diagram with exact rows and columns derived from the diagram above by replacing $v$ with $v''$.  Note that the square on the bottom right commutes, because there is no nonzero map from $J_Z'$ to  $Z'$ since $Z'\in \cmp B$ is not supported at vertex $i$ while $J_Z'\in \add \, J$ has top only consisting of summands of $S(i)$.   Moreover, the square on the bottom left commutes by definition of $v''$ and the equality $f_Zh=vf_Y$, which holds by the diagram above.
\[\xymatrix{
&0\ar[d]&0\ar[d]&0\ar[d]\\
0\ar[r]&J_Y'\ar[d]\ar[r] & X \ar[r]\ar[d]^{u''} & X'\ar[r]\ar[d]^{u'} & 0 \\
0\ar[r] & J_Y\oplus J_Z' \ar[r]^{\begin{bsmallmatrix}f_Y&0\\0&1\end{bsmallmatrix}} \ar[d]^{\begin{bsmallmatrix}h &i \end{bsmallmatrix}}& Y\oplus J_Z' \ar[r]^{[\pi_Y \,\, 0]} \ar[d]^{v''}& Y' \ar[r] \ar[d]^{v'}& 0 \\
0\ar[r] & J_Z \ar[r]^{f_Z} \ar[d]& Z\ar[r]^{\pi_Z} \ar[d]& Z' \ar[r] \ar[d]& 0 \\
&0&0&0
}\]
Observe that the short exact sequence in the middle column is in $\cmp\, A$, since $Y, Z, J_Z' \in \cmp\, A$ and $X$ is a submodule of $Y\oplus J_Z'$.  Furthermore, the sequence in the third column is precisely the short exact sequence given in equation \eqref{eq:22}, and by Lemma~\ref{lem largest quotient} it is obtained from the one in the middle by applying the functor $F$, since the short exact sequence in the first column consists of summands of $J$.   This completes the proof of the claim. 
\end{proof}

\begin{remark}
The statement that (b) implies (a) is a special case of the main theorem of Chen's paper \cite{Chen}, because the condition (b) implies that $J$ is a homological ideal of finite projective dimension.
Also, the generalization of Lu's algorithm \cite{Lu} appearing in \cite{SS4} which involves adding/removing vertices while preserving the syzygy category is  a special case of the corollary.
\end{remark}

\begin{example}\label{ex1}
 
 We illustrate the theorem in an example. Let $A$ be the  algebra given by the quiver on the left below with relations $\za\zb=\zg\zd, \ \ze\za=\zs\tau, \ \ze\zg=\zd\ze=\zb\ze=0, \ \zb\zs=\tau\zb=0 $,
and let $J=Ae_5 A$. Then $B$ is the algebra given by the quiver on the right below with relations $\za\zb=\zg\zd,\ \ze\za= \ze\zg=\zd\ze=\zb\ze=0$. 
\[\xymatrix{
1\ar[r]^\za\ar[d]_\zg&2\ar[d]^\zb&5\ar[l]_\tau\\
3\ar[r]_\zd&4\ar[ul]^\ze\ar[ur]_\zs
}
\qquad\qquad\qquad
\xymatrix{
1\ar[r]^\za\ar[d]_\zg&2\ar[d]^\zb\\
3\ar[r]_\zd&4\ar[ul]^\ze
}
\]
 The right $A$-module $J$ is equal to \[J=e_5A\oplus \zs A\oplus \zd\zs A = P(5)\oplus P(5)\oplus S(5)=\begin{smallmatrix}
5\\2
\end{smallmatrix}\oplus \begin{smallmatrix}
5\\2
\end{smallmatrix}\oplus 5.\]
The category $\cmp A$ is shown below. The subcategory $J^\perp$ is given by the modules in red and the  category $\underline{J^\perp/(\Fac J)}$ is given by the underlined modules.
\[\large\xymatrix@C20pt{
&&&&&&
{\begin{smallmatrix}
\color{red} P(2)
\end{smallmatrix}}
\ar[rd] &&{\begin{smallmatrix}
\color{red} P(1)
\end{smallmatrix}}
\ar[rd] 
\\
&{\begin{smallmatrix}
1\\2
\end{smallmatrix}}
\ar[rd] 
&&
{\begin{smallmatrix}
\color{red} 5
\end{smallmatrix}}
\ar[rd] 
&&
{\begin{smallmatrix}
4
\end{smallmatrix}}
\ar[rd] \ar[ru]
&&
\underline{\color{red}\begin{smallmatrix}
 2\ 3\\4
\end{smallmatrix}}
\ar[rd] \ar[ru]
&&
{\begin{smallmatrix}
1\\2
\end{smallmatrix}}
&&
\\
{\begin{smallmatrix}
2
\end{smallmatrix}}
\ar[rd] \ar[ru]
&&
\underline{\color{red} \begin{smallmatrix}
1\ 5\\2
\end{smallmatrix}}
\ar[rd] \ar[ru]
&&
\underline{\color{red} \begin{smallmatrix}
4\\ 5
\end{smallmatrix}}
\ar[rd] \ar[ru]
&&
{\begin{smallmatrix}
3\\4
\end{smallmatrix}}
 \ar[ru]
&&
{\begin{smallmatrix}
2
\end{smallmatrix}}
\ar[rd] \ar[ru]
&&
\\
&
{\color{red} \begin{smallmatrix}
P(5)
\end{smallmatrix}}\ar[ru]
&&
{\color{red} \begin{smallmatrix}
P(4)
\end{smallmatrix}}\ar[ru]
&&
{\color{red} \begin{smallmatrix}
P(3)
\end{smallmatrix}}\ar[ru]
&& &&
{\begin{smallmatrix}
\color{red} P(5)
\end{smallmatrix}}
}
\]
The category $\cmp B$ is shown below. The modules in $\scmp B$ are underlined. To compute the image of the functor $F\colon \underline{J^\perp/(\Fac J)}\to \scmp B$ on the module $X=\begin{smallmatrix}
1\ 5\\2
\end{smallmatrix}$, we see that the $\add \,J$ approximation is given by the inclusion $f_X\colon P(5)\to \begin{smallmatrix}
1\ 5\\2
\end{smallmatrix}$ and thus $F(X)=\cok(f_X) =1$.
\[\large\xymatrix@C20pt{
& 
&&
&&{\begin{smallmatrix} 
\color{red} P_B(2)
\end{smallmatrix}}\ar[rd]
&&
{\color{red}\begin{smallmatrix}
P_B(1)
\end{smallmatrix}}
\ar[rd] 
\\
&&\underline{\color{red} \begin{smallmatrix}
1
\end{smallmatrix}}
\ar[rd] 
&&
\underline{\color{red} \begin{smallmatrix}
4
\end{smallmatrix}}
\ar[rd] \ar[ru]
&&
\underline{\color{red}\begin{smallmatrix}
2\ 3\\4
\end{smallmatrix}}
 \ar[ru]
&&\underline{\color{red}\begin{smallmatrix}
1
\end{smallmatrix}}
&&
\\
&
&&
{\color{red} \begin{smallmatrix}
P_B(4)
\end{smallmatrix}}\ar[ru]
&&
{\color{red} \begin{smallmatrix}
P_B(3)
\end{smallmatrix}}\ar[ru]
&& &&
}
\]
\end{example}

The following example illustrates some of the subtleties involved in Theorem~\ref{thm:eq} and Corollary~\ref{cor}.

\begin{example}
Let $A$ be an algebra given by the following quiver with relation $\alpha^2=0$.  Note that $A$ is a Jacobian algebra given by the quiver with potential $W=\alpha^3$ where we assume that the characteristic of the field $\kb$ is not 3. 
\[\xymatrix{ 2 \ar[r]^\beta & 1 \ar@(ul,ur)^{\alpha}}\]
Let $J = Ae_1A$ and $B = A/J$.   Then as a right $A$-module we have $J \cong  e_1A\oplus \beta A = P_A(1)\oplus P_A(1)$, so $J$ is projective.  Also $J$ can be written as a sum  $J = e_1 A + \beta A + \alpha A+\beta\alpha A$, as in Lemma~\ref{lem:J}(b), but this is not a direct sum decomposition since    $\alpha A\subset e_1 A$ and $\beta\alpha A \subset \beta A$.  The category $\cmp\, A$ consists of three indecomposable modules $P_A(2)={\begin{smallmatrix}2\\1\\1\end{smallmatrix}}, P_A(1)={\begin{smallmatrix}1\\1\end{smallmatrix}}$ and the simple module $S(1)$, while $\Fac J \cap \cmp\, A = \add P_A(1)\oplus S(1)$.   Then $J^\perp/ (\Fac J)$ consists of $P_A(2)$.  On the other hand, $\cmp\, B$ consists only of the simple module $S(2)=F(P_A(2))$.  This shows the equivalence $F$ from Theorem~\ref{thm:eq}.

Note also that $\scmp\, A$ and $\scmp\, B$ are not equivalent as the first category consists of a single indecomposable module $S(1)$  while the second category is zero.  In particular, condition (a) of Corollary~\ref{cor} is not satisfied, and equivalently condition (b) is not satisfied because $m_{11}=2$.  
\end{example}

\section{Applications in finite CM-type}\label{sect 4}

In this section we apply Corollary~\ref{cor} to dimer tree algebras and their skew group algebras, which are certain classes of 2-Calabi-Yau tilted algebras of finite CM-type.   In particular, we obtain explicit characterizations in terms of quivers when a reduction at an idempotent preserves the $\scmp$ category of these algebras, see Theorem~\ref{thm A reduction} and Theorem~\ref{thm D}.

\subsection{Reduction of dimer tree algebras} 
 Dimer tree algebras are a special class of 2-Calabi-Yau tilted algebras that are of CM-type $\mathbb{A}$. These algebras were introduced in \cite{SS3} as Jacobian algebras of a quiver $Q$ with potential $W$, where $Q$ is a finite connected quiver without loops and 2-cycles such that every arrow of $Q$ lies in at least one chordless cycle and, furthermore, the dual graph of $Q$ is a tree. It follows from the definition that $Q$ is a planar quiver, and we define the potential $W$ to be the signed sum of the chordless cycles in $Q$, where the sign of a clockwise cycle is +  and the sign of a counterclockwise cycle is -. An arrow in $Q$ is a \emph{boundary} arrow if it lies in exactly one chordless cycle, and it is an \emph{interior} arrow otherwise; in this case it lies in exactly two chordless cycles. 
 
For every boundary arrow $\zb$ of $Q$, the \emph{maximal zigzag path} (or cycle path) starting with $\zb$ is the unique path $\zb_1\zb_2\ldots\zb_{\ell(\zb)}$ such that 
 
\begin{itemize}
 \item $\zb_1=\zb$ and $\zb_{\ell(\zb)}$ are boundary arrows, and $\zb_2,\ldots,\zb_{\ell(\zb)-1}$ are interior arrows;
\item every subpath of length two $\zb_i\zb_{i+1}$ is a subpath of a chordless cycle and no subpath of length three is a subpath of a chordless cycle.
\end{itemize}
The  
  \emph{weight} $\wt(\zb)$ of a boundary arrow $\zb$ is equal to 1, if the length $\ell(\zb)$ of the zigzag path starting with $\zb$ is odd, and equal to 2, if $\ell(\zb)$ is even. Dually, the coweight $\overline{\wt}(\zb)$ is equal to 1 or 2 depending on the parity of the length of the unique maximal zigzag path ending with $\zb$. 
 
 In Example~\ref{ex1}, the zigzag path starting with $\tau$ is $\tau \zb\ze\zg$ and the zigzag path ending with $\tau$ is $\zs\tau$. Thus the weight $\wt(\tau)$ is 2 and the coweight $\overline{\wt}(\tau)=2$. 

\smallskip

As a  first application of our main theorem, we have the following strengthening of a result from \cite{SS4}. 

\begin{thm}
 \label{thm A reduction}
 Let $A$ be a dimer tree algebra, let $i$ be a vertex in its quiver and denote by $J=Ae_iA$ the corresponding 2-sided ideal.  Then the following are equivalent. 
 
 (a) $A$ and $A/J$ have the same CM-type.
 
 (b) There exists a 3-cycle in $Q$ of the form $\xymatrix{h\ar[r]^\zb&i\ar[r]^\zg&j\ar@/^10pt/[ll]^\zd}$ with $\zb$ and $\zg$ boundary arrows and $\zd$ an interior arrow such that $\overline {\wt}(\zb)=1$ and $\wt(\zg)=1$. 
\end{thm}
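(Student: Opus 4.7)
The plan is to reduce, via Corollary~\ref{cor}, the question of CM-type preservation to the question of whether $J = Ae_iA$ is a projective $A$-module, and then to characterize that projectivity combinatorially in the dimer tree setting. By Corollary~\ref{cor} the categories $\scmp\,A$ and $\scmp\,A/J$ are equivalent if and only if $J \cong \bigoplus_{j \in Q_0} P_A(i)^{m_{ji}}$. Since dimer tree algebras are of finite CM-type~$\mathbb{A}$ with the combinatorics of $\scmp\,A$ pinned down explicitly in \cite{SS3}, I would first argue that for this class ``same CM-type'' is equivalent to the stronger statement $\scmp\,A \cong \scmp\,A/J$: reduction can only collapse indecomposables, so matching CM-types forces the full stable equivalence. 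It then suffices to prove that condition~(b) of the theorem is equivalent to $J$ being projective.

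For the direction (b)~$\Rightarrow$~$J$ projective, I would start from the hypothesized 3-cycle $h \xrightarrow{\zb} i \xrightarrow{\zg} j \xrightarrow{\zd} h$. The cyclic-derivative relations of the potential coming from this 3-cycle (which equate $\zg\zd$ with paths avoiding $i$ and $\zb\zg$ with paths through $h$) are active at $i$. Combined with the coweight condition $\overline{\wt}(\zb)=1$ and the weight condition $\wt(\zg)=1$, which impose odd length on the maximal zigzag paths through $\zb$ and $\zg$, one obtains exactly the cancellations needed to verify that for every path $w$ ending at $i$, the right multiplication map $x \mapsto wx$ restricted to $e_iA$ is injective and identifies $wA$ with $P_A(i)$. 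Combined with Lemma~\ref{lem:J}(b) and (c), this gives $J \cong \bigoplus_j P_A(i)^{m_{ji}}$, i.e.\ $J$ is projective.

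For the converse direction, I would argue by contrapositive. Assuming that no such 3-cycle with the required weight data exists at $i$, the task is to produce a path $w$ ending at $i$ together with an arrow $\alpha$ starting at $i$ with $\alpha \neq 0$ in $P_A(i)$ but $w\alpha = 0$ in $A$; this obstructs the embedding $P_A(i)^{m_{ji}} \subset P_A(s(w))$ required by Corollary~\ref{cor}(d) and so prevents $J$ from being projective. Such a pair $(w, \alpha)$ is extracted from the local quiver configuration at $i$ by examining the three possible failure modes: no boundary-boundary 3-cycle through $i$, or wrong parity of the zigzag path ending at $\zb$, or wrong parity of the zigzag path starting at $\zg$. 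In each case the potential produces a monomial relation that equates a composition through $i$ with one not through $i$, yielding the required vanishing $w\alpha=0$.

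The principal obstacle is the contrapositive direction: the enumeration of local neighborhoods of $i$ and the matching of each failure mode to a concrete vanishing relation in $A$. The weight and coweight hypotheses are very delicate parity conditions on maximal zigzag paths, and the cyclic derivatives of the potential that kill a composition at $i$ depend sensitively on those parities. Controlling all cases should be tractable using the dual-tree structure of $Q$, which keeps the number of chordless cycles meeting at $i$ small and limits how zigzag paths can interact near $i$.
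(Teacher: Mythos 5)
Your overall reduction to ``$J$ projective'' via Corollary~\ref{cor} is the right frame, and the paper also works through Corollary~\ref{cor}, but there are several genuine gaps in your sketch. First, you assert that for a reduction of a dimer tree algebra ``same CM-type'' forces full stable equivalence because ``reduction can only collapse indecomposables''; this needs an argument. By Theorem~\ref{thm:eq}, $\scmp B$ is a \emph{subquotient} $\underline{J^\perp/(J)}$ of $\scmp A$, not a quotient, so matching Dynkin types does not trivially force equality; one must count or otherwise compare. The paper's proof leans on the same implicit equivalence, but you should not present it as obvious. Second, for (b) $\Rightarrow$ (a), you sketch a direct verification that $wA\cong P_A(i)$ for every path $w$ ending at $i$, extracted from the weight/coweight parities of zigzag paths and the cyclic derivatives of the potential. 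This is precisely the hard combinatorial content, and your description (``one obtains exactly the cancellations needed'') is not a proof; the paper sidesteps this by citing \cite[Proposition 4.10]{SS4}.

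Third, and most significantly, your contrapositive plan for (a) $\Rightarrow$ (b) --- enumerate failure modes of (b) and in each produce $w,\alpha$ with $w\alpha=0$ in $A$ but $e_i\alpha\neq 0$ --- never uses the fact that dimer tree algebras are \emph{Schurian} (\cite[Cor.~3.32]{SS3}), which is exactly the lemma the paper uses to make the case analysis collapse. The paper argues directly: assuming $\scmp A\cong\scmp B$ and using Corollary~\ref{cor}(d), any interior arrow at $i$, or any chordless cycle through $i$ of length $\geq 4$, would force a nonzero cyclic path at a vertex, contradicting Schurian; what remains is a $3$-cycle whose third arrow $\delta$ must be interior (otherwise $A$ has CM-type $\mathbb{A}_1$ but $A/J$ has type $\mathbb{A}_0$); the weight conditions then follow from \cite[Rem.~4.11]{SS4}. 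Your sketch also says the relation from the potential ``equates a composition through $i$ with one not through $i$, yielding the required vanishing'' --- but a commutativity relation of that form gives a \emph{nonzero} identification, not a vanishing, so as written the mechanism is wrong. The Schurian property is what converts such a commutativity relation into a contradiction; without invoking it, your case analysis does not close.
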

\begin{proof}
 The implication (b)$\Rightarrow$(a) was shown in \cite[Proposition 4.10]{SS4}.  
 
 Conversely, assume (a) holds. Let $\zb\colon h\to i$ be an arrow ending at $i$. If $\zb$ is not a boundary arrow then $\zb$ is contained in two chordless cycles $u\zb$ and $v\zb$ of $A$, and therefore the relation $\partial_\zb W= u-v$ given by the partial derivative of the potential in direction $\zb$ is a commutativity relation. In particular, $u=v$ are nonzero paths from $i$ to $h$ in $A$\footnote{We use here the fact that, since $A$ is a dimer tree algebra, the closure of the Jacobian ideal is equal to the Jacobian ideal itself. Thus the fact that $u$ does not lie in the ideal generated by the relations implies that $u$ is nonzero in $A$.}, and hence the vertex $h$ lies in the support of $P(i)$. 
 
 On the other hand, $\zb$ is a nonzero path from $h$ to $i$, thus the vertex $i$ lies in the support of $P(h)$. By Corollary~\ref{cor}(d), this implies that $P(i)\subset \rad P(h)$. However, from the above this would imply that the radical of $P(h)$ is supported at $h$, and hence there is a nonzero, non-constant path from $h$ to $h$. This is impossible since $A$ is Schurian by \cite[Corollary 3.32]{SS3}.  Thus every arrow that ends in $i$ is a boundary arrow.

 Now let $\zg\colon i\to j$ be an arrow starting at $i$.  Then $j$ lies in the support of $P(i)$. If $\zg$ is an interior arrow, then by the same argument as above, we have $i$ in the support of $P(j)$. Corollary~\ref{cor} implies $P(i)\subset \rad P(j)$, and hence also $j$ lies in the support of $\rad P(j)$. Again, this is impossible since $A$ is Schurian. 
 
 Therefore, there is a chordless cycle
 \[\xymatrix{i=k_1\ar[r]^{\zg=\zd_1} &j=k_2\ar[r]^{\zd_2}
 &k_3\ar[r]&\cdots \ar[r]&k_{\ell-1}\ar[r]^{\zd_{\ell-1}}
 &h=k_{\ell}\ar[r]^{\zb=\zd_{\ell}}&i=k_1}\]
 of length $\ell\ge 3$, with $\zb,\zg$ boundary arrows. Then $P(i)$ contains $k_1,k_2,\ldots,k_{\ell-1}$ in its support, and $P(k_3)$ contains $k_3,k_4,\ldots, h,i$.  If $\ell\ge 4$, then  $P(i)$ contains $k_3$ and $P(k_3)$ contains $i$. Again using Corollary~\ref{cor}, we see $P(i)\subset P(k_3)$. Thus if $\ell\ge 4$, then there is a cyclic path from $k_3 $ to $k_3$ which is impossible, since $A$ is Schurian. 
 
 Thus $\ell=3$ and we have a cycle of the form $\xymatrix{h\ar[r]^\zb&i\ar[r]^\zg&j\ar@/^10pt/[ll]^\zd}$ with $\zb$ and $\zg$ boundary arrows. If $\zd$ were also a boundary arrow, then this 3-cycle is the whole quiver of $A$ and its CM-type would be $\mathbb{A}_1$. However, the algebra $A/J$ is then without oriented cycles and of CM-type $\mathbb{A}_0$, which is a contradiction to part (a). Thus $\zd$ is an interior arrow. 
 This shows the existence and uniqueness of the 3-cycle in part (b). Furthermore, the conditions on the weight and coweight in (b) hold by \cite[Rem 4.11]{SS4}.  
\end{proof}

\begin{example}
Let $A$ be the dimer tree algebras with the following quiver.
\[\xymatrix@R25pt@C20pt{1\ar[r]&2\ar[d]&3\ar[l]\\
4\ar@{<-}[u]&5\ar@{<-}[l]\ar[lu]\ar[ru]\ar@{<-}[r]&6\ar@{<-}[u]\ar[r]&7\ar[lu]}\]
Let $B=A/Ae_7A$ be the algebra obtained by the reduction at vertex 7 and $C=A/Ae_4A$ by reduction at vertex 4. Then $A$ and $B $ have the same CM-type, because the vertex $7$ satisfies condition (b). Indeed the zigzag paths are 
$7\to 3\to 6 \to 5$ and $1\to 2\to5\to3\to6\to7$, and both have an odd number of arrows, so the (co)weight conditions in (b) are satisfied.

In contrast, the algebras $A$ and $C $ do not have the same CM-type, because the vertex $4$ does not satisfy condition (b). Here the zigzag path $3\to 2\to 5\to1\to 4$ ending at 4 has an even number of arrows, hence the coweight condition in (b) is not satisfied. 

Since 4 and 7 are the only vertices that lie in a boundary 3-cycle, no other reduction of $A$ will preserve the CM-type.
\end{example}
\subsection{Minimal dimer tree algebras have minimal skew group algebras}
 We recall a construction from \cite[Section 4]{SS5} that associates a family of 2-Calabi-Yau tilted algebra of CM-type $\mathbb{D} $ to every dimer tree algebra. 

Let $A^0=\textup{Jac}(Q^0,W^0)$ be a dimer tree algebra given as the Jacobian algebra of a quiver $Q^0$ with potential $W^0$.  We assume throughout that $Q^0$ has more than 3 vertices. Let $\za\colon 1\to 2$ be a boundary arrow in $Q^0$. 

Define $Q$ to be the quiver obtained by taking two copies $Q^0,(Q^{0})'$ of $Q^0$ and gluing them along the arrow $\za$. Thus every vertex $i\ne 1,2$ of $Q^0$ gives rise to two vertices $i,i'$ in $Q$, and every arrow $\zb\ne \za$ of $Q^0$ gives rise to two arrows $\zb,\zb'$ in $Q$. 

Let $G=\{1,\zs\}$ be the group with two elements, and define a $G$-action on $Q$ by
\[\zs\cdot i=i', \ \zs\cdot i' =i,\  \zs\cdot \zb=\zb', \  \zs\cdot \zb'=\zb,\]
for vertices $i,i'\ne 1,2$ and arrows $\zb,\zb'\ne \za$, and
\[\zs\cdot 1 =1,\ \zs\cdot 2 =2,\ \zs\cdot \za =\za.\]
 Let $W$ be the potential on $Q$ defined by $W=W^0-\zs W^0$, and define $A=\textup{Jac}(Q,W)$. The algebra $A$ is called the \emph{fibered product} of $A^0$ with itself along the  arrow $\za\colon 1\to 2$.  Because of our assumption that $Q^0$ has more than three vertices, $Q$ is \emph{not} following quiver.
 \begin{equation}\label{not Q}
{\xymatrix@R5pt{&2\ar[rd]\\1\ar[ru]\ar[rd]&&4\ar[ll]\\&3\ar[ru]}}
\end{equation}
We need to exclude this quiver because it is too small for the construction.
 
 The $G$-action on $Q$ induces an \emph{admissible} $G$-action on $A$, meaning that $\zs$ maps vertices to vertices and arrows to arrows, and $\zs$ fixes at least one vertex. 
 
 It is shown in \cite[Propositions 4.9, 4.10]{SS5} that $A$ is also a dimer tree algebra and  that every dimer tree algebra with a non-trivial admissible $G$-action arises this way.

Let $B=AG$ be the skew group algebra of $A$ with respect to the $G$-action as defined in \cite[Section 2.6]{SS5}. The quiver $Q_B$ of $B$ has one vertex $i$ for every $G$-orbit $\{i,\zs i\}$ of cardinality two in $Q$ and four vertices $1^+,1^-,2^+,2^-$  for the two vertices $1,2$ of $Q$ whose $G$-orbits are trivial. 
Moreover, $Q_B$ has one arrow $\zb\colon i\to j$ for every $G$-orbit $\{\zb,\zs\zb\}$ in $Q$  of cardinality two such that the arrows $\zb,\zs\zb$ are not incident to the vertices 1 and 2, and two arrows $\za^\pm\colon 1^\pm\to 2^\pm$ for the arrow $\za\colon 1\to 2 $ of $Q$, as well as two arrows $\zb^\pm$ for every arrow $\zb\ne \za$ of $Q$ that is incident  to the vertex 1 or 2. 

\begin{example} The quiver on the left is the quiver $Q$ of a dimer tree algebra $A$ with $G$-action indicated by the primes. The quiver on the right  is the quiver $Q_B$ of the (basic) skew group algebra $B$.
 \[\xymatrix{
7\ar[r]&4\ar[d]&3\ar[l]\ar[r]&5\ar[ld]&6\ar[l]\\
&1\ar[lu]\ar[r]^\za\ar[ld]&2\ar[d]\ar[u]\ar[rru]\ar[rrd]\\
7'\ar[r]&4'\ar[u]&3'\ar[l]\ar[r]&5'\ar[lu]&6'\ar[l]\\
}
\qquad\qquad\qquad
\xymatrix{
7\ar[r]&4\ar[d]\ar@/_10pt/[dd]&3\ar[l]\ar[r]&5\ar@/_
0pt/[ddl]\ar[ld]&6\ar[l]\\
&1^+\ar[lu]\ar[r]^{\za^+}&2^+\ar[u]\ar[rru]\\
&1^-\ar[luu]\ar[r]^{\za^-}&2^-\ar@/^10pt/[uu]\ar[rruu]\\
}\]

\end{example}

In \cite[Definition 3.18]{SS3}, we defined the \emph{total weight} of a dimer tree algebra as the sum of the weights of all boundary arrows. We showed that the CM-type of a dimer tree algebra of total weight $2N$ is  $\mathbb{A}_{N-2}$.
One of the main results of \cite{SS5} shows that the if $A$ has an admissible $G$-action then the CM-type of the corresponding skew group algebra $B=AG$ is  $\mathbb{D}_{\frac{N+1}{2}}$.

\begin{definition}
 We say that a 2-Calabi-Yau tilted algebra is  \emph{CM-minimal} if, for every vertex $i$, the reduction $A/J$ with $J=Ae_iA$ does not have the same CM-type as $A$.
\end{definition}
\smallskip

We now show the following result.

\begin{thm}
 \label{thm D}
 Let $A$ be a  dimer tree algebra with a nontrivial admissible $G$-action. Assume $A$ is  different from the algebra in (\ref{not Q}). Then the following are equivalent. 
 
 (a) $A$ is CM-minimal.
 
 (b)  The skew group algebra $B=AG$ is CM-minimal.
\end{thm}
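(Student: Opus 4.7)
The plan is to reduce the equivalence to a single combinatorial statement on the quiver $Q$ of $A$. On the one hand, by Theorem~\ref{thm A reduction} combined with Corollary~\ref{cor}(a)$\Leftrightarrow$(b), the algebra $A$ is CM-minimal if and only if the ideal $Ae_iA$ fails to be projective in $\textup{mod}\,A$ for every vertex $i\in Q_0$. On the other hand, Corollary~\ref{cor} applied to $B$ says that $B=AG$ is CM-minimal if and only if $Be_vB$ fails to be projective in $\textup{mod}\,B$ for every vertex $v$ of $Q_B$. The goal is therefore to translate projectivity of $Be_vB$ into projectivity of an appropriate ideal of $A$ and then show that the two families of conditions coincide.

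The main technical input is the correspondence between $G$-invariant ideals of $A$ and two-sided ideals of $B=AG$ under the induction functor $-\otimes_A AG$. Assuming $|G|=2$ is invertible in $\kb$, this functor is exact and preserves projectives, and for a $G$-invariant idempotent $e\in A$ corresponding to an idempotent $\bar e\in B$ one has $B\bar eB\cong (AeA)G$ as $B$-modules, so $B\bar eB$ is projective over $B$ if and only if $AeA$ is projective over $A$. Using the explicit description of $Q_B$ recalled above, this yields a vertex-by-vertex translation: for an orbit vertex $[j]\in Q_B$ with $j\ne 1,2$, projectivity of $Be_{[j]}B$ is equivalent to projectivity of $A(e_j+e_{\zs j})A$; for a split vertex $j^\pm\in Q_B$ with $j\in\{1,2\}$, projectivity of $Be_{j^\pm}B$ is equivalent to projectivity of $Ae_jA$. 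Because $\zs$ is an algebra automorphism sending projectives to projectives, $Ae_jA$ is projective if and only if $Ae_{\zs j}A$ is, and the decomposition $A(e_j+e_{\zs j})A\cong Ae_jA\oplus Ae_{\zs j}A$ is projective if and only if each summand is. Combining these steps, CM-minimality of $B$ becomes the condition that $Ae_iA$ is non-projective for every vertex $i\in Q_0$, which by Corollary~\ref{cor} is precisely CM-minimality of $A$.

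The main obstacle lies in the careful verification of the projectivity correspondence for the skew group algebra, and in particular in tracking the splitting of the two fixed vertices $1,2$ and the shared arrow $\za$ when passing between $Q$ and $Q_B$. The boundary/interior status and the weights needed for Theorem~\ref{thm A reduction} must be read off from $Q_B$ via the construction recalled above and the analysis of \cite{SS5}, and one needs the structural information from Lemma~\ref{lem:J} to ensure that the induced ideals in $B$ really arise as $B\bar eB$ for the idempotents attached to the orbit vertices of $Q_B$. The exclusion of the quiver in (\ref{not Q}) enters exactly here: in that degenerate case the fibered product becomes too small for the boundary/interior distinction and the weight-$1$ conditions from Theorem~\ref{thm A reduction} to be applied coherently. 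Once the projectivity correspondence is in place, both directions of the theorem follow simultaneously from the combinatorial criterion.
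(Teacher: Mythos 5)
Your plan is to reduce the theorem to a purely module-theoretic statement (non-projectivity of $Ae_iA$ for all $i$, respectively of $Be_vB$ for all $v$) and then match the two families of ideals through the skew group construction. Two of the intermediate claims you use to make this matching work are false or unjustified, and they are exactly the points where the paper has to do real work.

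First, you assert the decomposition $A(e_j+e_{\sigma j})A\cong Ae_jA\oplus Ae_{\sigma j}A$. This is not a direct sum in general. The ideal $Ae_jA$ is spanned by (residues of) paths through $j$, and $Ae_{\sigma j}A$ by paths through $\sigma j$; a nonzero path passing through both $j$ and $\sigma j$ lies in the intersection. Such paths exist in typical examples (for instance through the fixed vertices $1,2$ and the fixed arrow $\alpha$), so the sum of the two ideals is not direct, and one cannot conclude that $A(e_j+e_{\sigma j})A$ is projective iff each $Ae_jA$, $Ae_{\sigma j}A$ is. The paper avoids this entirely: it does not compare ideals but CM-types, and uses the monotonicity of CM-type under successive reductions together with Theorem~\ref{thm A reduction} applied twice (at $i$ and at $\sigma i$). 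The nontrivial step there is verifying, via the checkerboard polygon model, that the weight/coweight hypotheses of Theorem~\ref{thm A reduction} for $\sigma i$ survive the first reduction at $i$; this is the content you would have to supply and do not.

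Second, your translation at the split vertices is not a consequence of the induction correspondence you invoke. The $G$-invariant idempotent $e_1\in A$ decomposes in $B=AG$ as $e_{1^+}+e_{1^-}$, so $(Ae_1A)G\cong B(e_{1^+}+e_{1^-})B$, \emph{not} $Be_{1^\pm}B$. These are genuinely different two-sided ideals, and projectivity of the big one does not control projectivity of the small one. So the biconditional ``$Be_{j^\pm}B$ projective iff $Ae_jA$ projective'' is unsupported. In fact the relevant statement is stronger and is proved differently in the paper: using a nonzero path from $2^-$ to $1^+$ in $Q_B$ together with Corollary~\ref{cor}(d), one derives a nonzero path from $j$ to $j$ in $A$, contradicting the Schurian property. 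This shows $Be_{j^\pm}B$ is \emph{never} projective for the split vertices, independently of whether $A$ is CM-minimal, and this unconditional statement is what is needed. Your proposed argument neither proves nor replaces it.

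So there is a genuine gap: both the direct-sum decomposition and the split-vertex equivalence need to be removed from the argument, and in their place one needs (i) a persistence argument for the $3$-cycle criterion under a first reduction (which the paper carries out via the geometric model), and (ii) a separate Schurian argument ruling out the split vertices. Your write-up identifies these as ``the main obstacle'' but does not supply arguments that close them.
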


\begin{proof} (a)$\Rightarrow$(b).
Assume $A$ is CM-minimal. To show (b), suppose to the contrary that there exists a vertex $i$ in $Q_B$ such that $B$ and $B/J$ have the same CM-type, where $J=Be_i B$.  We will distinguish three cases. 
 
 Suppose first that $i\ne 1^\pm,2^\pm$. Then $B/J$ is isomorphic to the skew group algebra of the reduction $A/A(e_i+e_{\zs i})A$ of $A$ relative to the $G$-orbit of $i$. Since $A$ is minimal, the CM-type of this reduction is strictly smaller than the CM-type of $A$. Consequently, the CM-type of $B/J$ is strictly smaller than the CM-type of $B$, a contradiction.

Before considering the next case, we make the following general observations.
There is a nonzero path from $2$ to $1$ in $A$, since $\za\colon 1 \to 2$ is an interior arrow and therefore it does not induce a zero relation in the Jacobian algebra $A$. Furthermore, every path $u$ from $2^-$ to $1^-$ in $B$ is zero, because $\za^-\colon 1^-\to 2^-$ is a boundary arrow in $Q_B$ and therefore induces a zero relation on the path $u$.
 Therefore there exists a nonzero path $w$ from $2^-$ to $1^+$ in $Q_B$. This means that the projective indecomposable $P_B(2^-)$ is supported at the vertex $1^+$. 
 
Now  suppose that $i=1^+$. 
 Since $B/J$ and $B$ have the same CM-type, and $P_B(2^-)$ is supported at $1^+$, then Corollary~\ref{cor} (d) implies that $P_B(1^+)\subset P_B(2^-)$. Thus the existence of the  arrow $\za^+\colon 1^+\to 2^+$ implies that $2^+$ lies in the support of $P_B(2^-)$. Consequently, there is a nonzero path from $2^-$ to $2^+$ in $B$. 
 But then there must be a nonzero path from 2 to 2 in  $A$, which is impossible since $A$ is Schurian, by  \cite[Corollary 3.33]{SS3}. 
 This completes the proof in the case where $i=1^+$. The case $i=1^-$ is symmetric.
 
 Suppose now $i=2^-$. 
The indecomposable projective module $P_B(1^-)$ is supported at $2^-$, because of the arrow $\za^-\colon 1^-\to 2^-$. Thus Corollary~\ref{cor}(d) implies that $P_B(2^-)\subset P_B(1^-)$. On the other hand, we have seen above that  there is a nonzero path $w$ from $2^-$ to $1^+$ in $B$. 
Hence $1^+$ is in the support of 
$P_B(2^-)$ and, since   
$P_B(2^{-})\subset P_B(1^-)$, this implies the existence of a nonzero path from $1^-$ to $1^+$ in $B$. Again we get a contradiction to the fact that $A$ is Schurian. 
This proves the case $i=2^-$ and the case $i=2^+$ is symmetric.

(b)$\Rightarrow$(a). Now assume $B$ is CM-minimal, and suppose that $A$ isn't. Let $i$ be a vertex in $A$ and $J=Ae_iA$ the corresponding 2-sided ideal such that $A/J$ and $A$ have the same CM-type. Thanks to Theorem~\ref{thm A reduction}, the vertex $i$ lies in a unique 3-cycle $C$ of the form $\xymatrix{h\ar[r]^\zb&i\ar[r]^\zg&j\ar@/^10pt/[ll]^\zd}$ with $\zb$ and $\zg$ boundary arrows and $\zd$ an interior arrow. Moreover, we have the weight conditions $\wt(\zg)=\overline{\wt}(\zb)=1$. 
Since $\zs$ fixes only the interior arrow $\za\colon 1\to 2$ of $Q$, it follows that none of $i,\zb,j$ are fixed under the action of $\zs$. 
 Thus there is a second 3-cycle  $\zs C$ that is given by $\xymatrix{\zs h\ar[r]^{\zs \zb}&\zs i\ar[r]^{\zs \zg}&\zs j\ar@/^10pt/[ll]^{\zs \zd}}$, and $\zs \zb,\zs \zg$ are boundary arrows and $\zs\zd $ is not.  Moreover $\wt(\zs\zg)=\overline{\wt}(\zs\zb)=1$. Letting $\zs J=Ae_{\zs i}A$, Theorem~\ref{thm A reduction} implies $A/\zs J$ and $A$ have the same CM-type.

We will now show that the double quotient $(A/J)/\zs J$ also has the same CM-type. Indeed, since $A$ is not 
the algebra given by the quiver  (\ref{not Q}), the two 3-cycles $C$ and $\zs C$ are disjoint. 
Therefore the vertex $\zs i$ still lies in the 3-cycle $\zs C$ in the quotient $A/J$. 

Furthermore the condition on the weight and the coweight of the arrows at $i$ and at $\zs i$ can be stated equivalently,  in terms of the  white regions of  the checkerboard polygon $\cals$ of \cite[Section 3.3.1]{SS3},  by saying that the two white regions $W(i,\zb)$ and $W(i,\zg)$ that contain the endpoints of the radical line $\rho(i)$ of vertex $i$ do not contain a boundary edge of $\cals$. Symmetrically, the two white regions $W(\zs i,\zs \zb),W(\zs i,\zs \zg)$ that contain the endpoints of the radical line $\rho(\zs i)$ of vertex $\zs i$ also don't contain a boundary edge of $\cals$. 

The group element $\zs\in G$ acts on $\cals$ as a rotation by angle $\pi$, see \cite[Proposition 4.11]{SS5}, and $\zs W(i,\zb)= W(\zs i,\zs \zb)$ and $\zs W(i,\zg)= W(\zs i,\zs \zg)$. In particular, the regions $W(i,\zb), W(\zs i,\zs\zb)$ are disjoint, and so are the regions $W(i,\zg), W(\zs i,\zs\zg)$.
Therefore the regions $W(\zs i,\zs \zb), W(\zs i,\zs\zg)$ still do not contain a boundary segment after the removal of the radical line $\rho (i)$ from $\cals$. In other words, $w(\zs \zg)=\overline{w}(\zs\zb)=1$ also holds in the quotient $A/J$. Again thanks to Theorem~\ref{thm A reduction}, the double quotient $(A/J)/\zs J$ has the same CM-type as $A/J$, and hence the same CM-type as $A$.

The algebra $(A/J)/\zs J$ is obtained by reducing the algebra $A$ at the two vertices $i$ and $\zs i$ that form a $G$-orbit. The reduced algebra $(A/J)/\zs J$ is still a dimer tree algebra with admissible $G$-action defined as the restriction of the $G$-action on $A$. The fact that  $A$ and $(A/J)/\zs J$ have the same CM-type implies that the corresponding skew group algebras $B=AG$ and $B'=((A/J)/\zs J)G$ also have the same CM-type. Furthermore, we have $B'=B/Be_iB$, where $i$ is the vertex in the quiver of $B$ that 
corresponds to the $G$-orbit $i,\zs i$ in the quiver of $A$.
Thus $B'$ is a reduction of $B$ that has the same CM-type as $B$, and this is a contradiction to part (b).
\end{proof}

\end{document}